\newtheorem{theorem}{Theorem}
\newtheorem{problem}{Problem}
\newtheorem{example}{Example}
\newtheorem{definition}{Definition}
\newtheorem{lemma}{Lemma}
\newtheorem{proposition}{Proposition}
\newtheorem{remark}{Remark}
\newtheorem{assumption}{Assumption}
\providecommand{\norm}[1]{\left\lVert#1\right\rVert}%//.//
\providecommand{\abs}[1]{\left\lvert#1\right\rvert}
\providecommand{\pr}[1]{\left(#1\right)} %(.)
\providecommand{\pp}[1]{\left[#1\right]} %[.]
\providecommand{\set}[1]{\left\lbrace#1\right\rbrace} %{.}
\providecommand{\scal}[1]{\left\langle#1\right\rangle}%<.>
\def\sign{{\mathop {\rm sign}\nolimits}}
\def\supp{{\mathop {\rm supp}\nolimits}}
\date{}
\begin{document}
\title
{SIR Epidemics With State-Dependent Costs and ICU Constraints: A Hamilton-Jacobi Verification Argument and Dual LP Algorithms}
\author[1,a]{Lorenzo Freddi}
\author[2,3,*,a,b]{Dan Goreac}
\author[2,a,b]{Juan Li}
\author[2,*,a,b]{Boxiang Xu}
\affil[1]{Dipartimento di Scienze Matematiche, Informatiche e Fisiche,  via delle Scienze 206, 33100 Udine, Italy}
\affil[2]{School of Mathematics and Statistics, Shandong University, Weihai, Weihai 264209, PR China}
\affil[3]{LAMA, Univ Gustave Eiffel, UPEM, Univ Paris Est Creteil, CNRS, F-77447 Marne-la-Valle\'ee, France,}
\affil[*]{Corresponding author, email: dan.goreac@univ-eiffel.fr, email: boxiangxu@163.com}
\affil[a]{These authors have contributed equally to this work.}
\affil[b]{The work of these authors has been partially supported by the National Key R and D Program of China (NO. 2018YFA0703901) and the NSF of P.R. China (NOs. 12031009, 11871037).}
\maketitle

\begin{abstract}
The aim of this paper is twofold. On one hand, we strive to give a simpler proof of the optimality of greedy controls when the cost of interventions is control-affine and the dynamics follow a state-constrained controlled SIR model. This is achieved using the Hamilton-Jacobi characterization of the value function, via the verification argument and explicit trajectory-based computations. Aside from providing an alternative to the Pontryagin complex arguments in \cite{AFG2021+} (see also \cite{AFG_2022_corr}), this method allows one to consider more general classes of costs; in particular state-dependent ones. On the other hand,  the paper is completed by linear programming methods allowing one to deal with possibly discontinuous costs. In particular, we propose a brief exposition of classes of linearized dynamic programming principles based on our previous work and ensuing dual linear programming algorithms. We emphasize the particularities of our state space and possible generations of forward scenarios using the description of reachable sets.
\end{abstract}

\begin{keywords}Optimal control, State Constraints,Hamilton-Jacobi, Epidemics, Modelling, Linear programming;\\
\textbf{MSC2020}: 49L12, 49L25,  92D30, 65K15.
\end{keywords}
\section{Introduction}

We deal here with a classical SIR model (cf.\  \cite{kermack1927contribution}) in which the total population is considered to be time-independent and the contact rate is controlled via an external parameter (linked to, say, confinement).  Such models are far from being new  as witnessed by the rich literature on the subject \cite{anderson1992infectious,Behncke,hansen2011optimal,Mart} and the recent pandemic has stimulated an increasing number of contributions, e.g., \cite{alvarez2020simple,Kruse,Ketch,bolzoni2019optimal}. For us, the aim is to maintain the number of infected individuals (or, equivalently, the proportion of infection from the total population) upper-bounded by a chosen parameter $I^*$ (resp., a proportion $i^*$). The parameter is to be thought in connection with the intensive-care unit capacities (see \cite{Kantner,Miclo,AFG2021+}). Motivated by the recent paper \cite{AFG2021+}, we recall the \textit{green} maximal zone in which no control is needed to stabilize the infections and the \textit{yellow} zone in which policies allowing the trajectory to be kept under the level $i^*$ do exist. 
The explicit treatment of the yellow zone (see also \cite{Angulo2021} for geometrical intuitions) is achieved in \cite{AFG2021+} by using normal cone (viability) conditions; see also the recent papers \cite{esterhuizen2021epidemic}, {\cite{ester2021}} for analogous treatments of such sets. We give here some further geometrical insights into the splitting of this yellow zone following the need for \textit{strict confinement} and some conditional viability properties of such zones (see Proposition \ref{PropB} in the Appendix).

In \cite{AFG2021+}, we have considered a state-independent affine-type cost and established the optimality of the greedy policy acting only when the trajectory is about to exit the manageable yellow set. The optimality has been investigated by using an extensive analysis of the Pontryagin extremals of the underlying control problem.  We have chosen here to give a different proof of the optimality of such greedy policies. This is achieved by giving explicit worked-out formulae for the associated cost function, based on computations on the trajectories. It turns out that the regularity of this candidate function allows one to check the naturally-associated Hamilton-Jacobi equation (in an almost sure sense). Furthermore, the method allows to treat considerably more general costs, the price to pay being essentially linked to the usual regularity (cf. Assumption \ref{Ass} and Eq.\ \eqref{Assiv}) and an inequality \eqref{GenCond}. A general class of examples connected with \cite{AFG2021+},  is presented.
We should emphasize that while several authors (e.g.  \cite{Ames2020}, \cite{esterhuizen2021epidemic}) look into such greedy policies, in connection with the "safety" issues,  the conditions on the cost guaranteeing their optimality are not made explicit.

A technical point concerns some hints into the actual regularity of the value function without \eqref{GenCond}.  Since we are focused on a particular type of verification with a specific candidate in mind, we will only present these results in the Appendix A1.  The first discussion, in Section \ref{A1_1}, concerns the uniqueness of the solution of the associated Hamilton-Jacobi equation. We follow the outward qualification and backward invariance arguments in \cite{FrankowskaPlaskacz2000}. The second discussion (cf.\ Section \ref{A1_2}) concerns the inward qualification arguments in \cite{Soner86_1}.

The second part of the paper (Section \ref{Section4}) presents a method allowing one to deal with possibly discontinuous costs. We recall here the embeddings of the state-constrained controlled trajectories into occupation measures, in the spirit of \cite{gaitsgory_quincampoix_09} or \cite{Goreac_Ivascu_13}, \cite{gaitsgory_06}, \cite{Borkar_Gaitsgory_2005}, etc. The state constraints imply a support condition for such measures and the differential formula applied to regular functions of the dynamics imply a linear constraint, where the linearity is in the spirit of functionals of regular measures.  Minimization over such sets of measures is paired with convenient dual formulations (e.g. \cite{gaitsgory_quincampoix_09}, \cite{Serea_Quincampoix_2009}, \cite{G2}, \cite{Goreac_Ivascu_13}). Such formulations are naturally connected with the (subsolutions of the) Hamilton-Jacobi equation satisfied by the value function (e.g., \cite[EQ. 3.1]{gaitsgory_quincampoix_09}, see also our formulation in \eqref{F}).  

Since we work here under state constraints, we present, in the spirit of  \cite{Goreac_Ivascu_13}, a linearized dynamical programming principle (DPP) in \eqref{DPP}. This is complemented by a modified DPP, in \eqref{DDPP}.  For our readers' sake, we sketch the proof,  following Krylov's \textit{shaking of coefficients} method in the Appendix \ref{A2}. This formulation (in \eqref{DDPP}) is very much in the spirit of the dual dynamic programming algorithms introduced in \cite{Pereira1991} and \cite{Shapiro_2011}, also known as Benders decomposition (cf.\ \cite{Benders1962}). Of course, in essence, such algorithms are not completely new, their connections with occupation measures-formulations appearing, for example in \cite{LHT_2008} or,  more recently, in \cite{HWL_2020}. The fundamentals consist in the use of polynomial bases (see \cite{Llavona_1986} for connections with Nachbin's theorem) and Putinar's results on positiveness and moments coherence (see \cite{Putinar1993}). However, the presence of state constraints, the various formulations extending to semi-continuous settings and the Krylov-based duality arguments justify our interest. 
Furthermore, we provide, in the SIR setting, a detailed description of the state-induced constraints on semi-algebraic sets (based on geometrical/viability descriptions) as well as reachability considerations with an impact on the generation of scenarios in the forward stage(s).  To the best of our knowledge, such arguments are new.

Let us briefly explain the structure of the paper.  We begin, in Section \ref{Section2} by recalling the controlled SIR dynamics and basic explicit computations when using constant control policies. We proceed with a description of green (self-regulating, mathematically invariant) and yellow (manageable, mathematically viability kernels) areas with respect to ICU capabilities-induced constraints.  The arguments are illustrated on a toy example where the data correspond more or less to the most recent decision of strict confinement in France retaining an incidence of 400 cases per 100.000 inhabitants, and complemented by the description of some further curves of interest.  The control problem is presented in Section \ref{Section3}. We provide the Hamilton-Jacobi equation associated to the value function as well as a verification argument as to the optimality of a greedy policy.  Explicit computations in connection to such closed-loop policies are provided in Lemma \ref{CompW} which, in turn, allows us to obtain the main verification result, Theorem \ref{ThMain}. The consistency of the assumptions are illustrated in Example \ref{ExpMain}. Section \ref{Section4} deals with the linear formulation of control problems. Relevant results are recalled (set of constraints, primal and dual formulations, equivalence) and they are employed to provide dynamic programming principles (cf.\  Proposition \ref{PropDPP}). The different formulations of two-stage problems are presented and followed by an extensive description of tools and procedures. These are thought in tight connection with the SIR system under analysis. 

Throughout the paper, we denote by ${\mathbb R}_+:=[0,+\infty)$, ${\mathbb R}^*_+:=(0,+\infty)$ and $\mathbb{N}^*=\mathbb{N}\setminus\{0\}$. Given an interval $I\subset\mathbb{R}$ and a (subspace of a) metric space $B$,  $\mathbb{L}^0\pp{I;B}$ will stand for the family of Borel-measurable $B$-valued functions whose domain is $I$. The usual 0/1-valued indicator function of sets will be denoted by $\mathbf{1}$, while the $0/\infty$-version is denoted by $\chi$. 

\section{Controlled SIR systems: geometrical considerations}\label{Section2}
\subsection{The Dynamics}
We consider the following deterministic evolution of a SI(R) model
\begin{equation}\label{SIR}
\begin{cases}
ds^{s_0,i_0,a}(t)=-\beta\pr{1-a(t)}s^{s_0,i_0,a}(t)i^{s_0,i_0,a}(t)\,dt\\
di^{s_0,i_0,a}(t)=\pr{\beta(1-a(t))s^{s_0,i_0,a}(t)-\gamma}i^{s_0,i_0,a}(t)\,dt\\
dr^{s_0,i_0,a}(t)=\gamma i^{s_0,i_0,a}(t)\,dt.
\end{cases}
\end{equation}
The control policy $a$ takes its values in a set $\pp{0,\overline a}$, for some parameter $\overline a\in(0,1)$. For the construction of trajectories, one uses Borel measurable inputs $a\in\mathbb{L}^0\pr{\mathbb{R};\pp{0,\overline a}}$.  The contact rate $\beta>0$ and the recovery rate $\gamma$ are assumed to satisfy the consistency condition $0<\gamma<\beta (1-\overline a)$. As it will be clear afterwards, the proportion $\frac{\gamma}{\beta (1-\overline a)}$ is the one after which strict confinements are to be envisaged. When the spontaneous recovery parameter $\gamma$ is large, the epidemics tend to disappear without strict interventions.
\begin{remark}
1. As it is usually the case, we deal with a time-constant population size. The components $s_0$ and $i_0$ are regarded as proportions i.e. $s_0+i_0\leq 1$. \\
2. A simple look at the dynamics shows that $\mathbb{R}_+\times\mathbb{R}_+$ is time-invariant in the sense that, independently of the control used, starting with $s_0\geq 0$ and $i_0\geq 0$,  neither $s^{s_0,i_0,a}(t)$ nor $i^{s_0,i_0,a}(t)$ becomes negative.\\
3. Actually, in a normalized framework, if $s_0+i_0\leq 1$ (and both non-negative), then $s^{s_0,i_0,a}(t)+i^{s_0,i_0,a}(t)\leq s_0+i_0\leq 1$, for every $t\geq 0$. This yields the time-invariance of the triangle $\mathbb{T}:=\set{\pr{s_0,i_0}\in\mathbb{R}_+^2:\ s_0+i_0\leq 1}$ with respect to the control system \eqref{SIR}.
\end{remark}
%\subsection{Trajectory computations}
One notes that, for every $t\in\mathbb{R}$,  and for constant controls $a\in\pp{0,\overline a}$, \begin{align}\label{Comp0}s^{s_0,i_0,a}(t)+i^{s_0,i_0,a}(t)&=s_0+i_0-\gamma\int_0^ti^{s_0,i_0,a}(u)\,du=s_0+i_0+\frac{\gamma}{\beta(1-a)}\log\frac{s^{s_0,i_0,a}(t)}{s_0}.\end{align}
\subsection{Colour zones}
\begin{definition}
1. We call \textit{green zone} the set $\mathcal{G}$ of initial configurations $\pr{s_0,i_0}$ belonging to the set $\mathbb{T}:=\set{\pr{s_0,i_0}\in\mathbb{R}_+^2:\ s_0+i_0\leq 1}$ for which the infections remain under the level $i^*$ even without any intervention, i.e., 
\[\mathcal{G}:=\set{\begin{split}\pr{s_0,i_0}\in\mathbb{T}:\ \forall a\in\mathbb{L}^0\pr{\mathbb{R};\pp{0,\overline a}},\ \forall t\geq 0, i^{s_0,i_0,a}(t)\leq i^*\end{split}}.\]
2. We call \textit{yellow zone} the set $\mathcal{Y}$ of initial configurations $\pr{s_0,i_0}\in\mathbb{T}$ for which the infections can be kept under $i^*$ with some Borel measurable, $\pp{0,\overline a}$-valued, intervention $a\in\mathbb{L}^0\pr{\mathbb{R}_+;\pp{0,\overline a}}$, i.e., \[\mathcal{Y}:=\set{\begin{split}\pr{s_0,i_0}\in\mathbb{T}:\ \exists\, a\in\mathbb{L}^0\pr{\mathbb{R};\pp{0,\overline a}}\mbox{ s.t.  }
i^{s_0,i_0,a}(t)\leq i^*,\ \forall t\geq 0\end{split}}.\]The strategies $a$ keeping the trajectory under ICU capability will be referred to as {\em admissible}, i.e.,  describing a set $\mathcal{A}d(s_0,i_0)$; when there is no confusion at risk, we will drop the dependency on the initial data.
\end{definition}
The following result is taken from the recent paper \cite{AFG2021+}. Its proof is based on viability (state-constrained evolution) theory and will be omitted here.
\begin{theorem}[Theorem 2.3 in \cite{AFG2021+}]\label{ThViab}
1. The green zone is explicitly given by \begin{align}
\label{Green}
\mathcal{G}=\set{\pr{s_0,i_0}\in\mathbb{T}:\ s_0\leq\phi(i_0)},
\end{align}
where $x:=\phi(i)$ is the unique solution of the equation $-x+\frac{\gamma}{\beta}\log x=i-i^*+\frac{\gamma}{\beta}\log\frac{\gamma}{\beta}-\frac{\gamma}{\beta}$ satisfying $x\geq \frac{\gamma}{\beta}$.\footnote{Another solution of this equation exists but it does not exceed $\frac{\gamma}{\beta}$.}\\
2. The yellow zone is explicitly given by \begin{align}
\label{Yellow}
\mathcal{Y}=\set{\pr{s_0,i_0}\in\mathbb{T}:\ s_0\leq\psi(i_0)},
\end{align}
where $x:=\psi(i)$ is the unique solution of the equation $-x+\frac{\gamma}{\beta\pr{1-\overline a}}\log x=i-i^*+\frac{\gamma}{\beta\pr{1-\overline a}}\log\frac{\gamma}{\beta\pr{1-\overline a}}-\frac{\gamma}{\beta\pr{1-\overline a}}$ satisfying $x\geq \frac{\gamma}{\beta(1-\overline a)}$.\footnote{Another solution of this equation exists but it does not exceed $\frac{\gamma}{\beta(1-\overline a)}$.}\\
\end{theorem}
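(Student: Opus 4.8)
The plan is to bypass the viability machinery of \cite{AFG2021+} and argue directly on the trajectories, exploiting the fact that the infection level, viewed as a function of the (strictly monotone) susceptible variable, obeys a scalar ODE whose right-hand side is monotone in the control. First I would note that, as long as $i>0$ and $a<1$, the first line of \eqref{SIR} gives $\dot s<0$ strictly, so along any trajectory $t\mapsto s(t)$ is a strictly decreasing bijection onto its range and may be used as the independent variable. Dividing the second equation of \eqref{SIR} by the first yields
\[\frac{di}{ds}=-1+\frac{\gamma}{\beta(1-a)s},\]
whence, for every level $s\leq s_0$ visited by the trajectory,
\[i(s)=i_0+\int_s^{s_0}\pr{1-\frac{\gamma}{\beta(1-a(\sigma))\sigma}}\,d\sigma,\]
where $a(\sigma)$ denotes the (well-defined) control value at the unique instant the state passes through the level $\sigma$.

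The key observation is that the integrand is pointwise increasing in $1-a$, hence pointwise maximized by $a\equiv 0$ and minimized by $a\equiv\overline a$, uniformly in $\sigma$. Consequently $i^{s_0,i_0,a}(s)\leq i^{s_0,i_0,0}(s)$ and $i^{s_0,i_0,a}(s)\geq i^{s_0,i_0,\overline a}(s)$ for every control $a$ and every common level $s$. Taking suprema over $s$ (and checking that the relevant peak abscissa lies in the range swept by the compared trajectory) shows that the no-intervention policy $a\equiv 0$ maximizes the peak infection while the maximal-confinement policy $a\equiv\overline a$ minimizes it. This reduces the two membership tests to explicitly solvable constant-control problems: $\pr{s_0,i_0}\in\mathcal{G}$ iff the $a\equiv 0$ peak is $\leq i^*$, and $\pr{s_0,i_0}\in\mathcal{Y}$ iff the $a\equiv\overline a$ peak is $\leq i^*$.

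Next I would compute these peaks from the conservation identity \eqref{Comp0}. For a constant control $a$, the infection increases while $s>\frac{\gamma}{\beta(1-a)}$ and decreases afterwards, so the peak is attained at $s=\frac{\gamma}{\beta(1-a)}$ (or at $s_0$ itself, giving peak $=i_0$, when $s_0$ already lies below this threshold). Substituting $s=\frac{\gamma}{\beta(1-a)}$ into \eqref{Comp0} yields a closed form for the peak; setting it equal to $i^*$ and rearranging produces exactly the transcendental equation defining $\phi$ (for $a=0$) and $\psi$ (for $a=\overline a$). Finally I would analyze the scalar map $x\mapsto -x+\frac{\gamma}{\beta(1-a)}\log x$, which is unimodal with maximum at $x=\frac{\gamma}{\beta(1-a)}$: by the intermediate value theorem the level equation has exactly two roots below the maximal value and none above it, and the admissible boundary is the decreasing branch $x\geq\frac{\gamma}{\beta(1-a)}$ singled out in the statement, the spurious root being the one dismissed in the footnotes. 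The degenerate regime $s_0\leq\frac{\gamma}{\beta(1-a)}$ (peak $=i_0$) and the trivial case $i_0=0$ are folded in by noting that there $\phi(i_0)\geq\frac{\gamma}{\beta}\geq s_0$ (resp. $\psi(i_0)\geq\frac{\gamma}{\beta(1-\overline a)}\geq s_0$), so that the stated inequality reduces to the requirement $i_0\leq i^*$ that characterizes the respective colour zone in that regime.

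The main obstacle I anticipate is the rigorous justification of the extremal-control comparison of the second step for general measurable, time-varying $a$. The reparametrization by $s$ is clean while $i>0$, but one must handle the instant the trajectory reaches its peak and the fact that different controls sweep different $s$-ranges, so that the pointwise domination of $i(\cdot)$ transfers correctly to the suprema; this is precisely where one verifies that the peak abscissa of the compared policy indeed lies in the interval swept by the extremal policy, using $\frac{\gamma}{\beta(1-a)}\leq\frac{\gamma}{\beta(1-\overline a)}$ together with the fact that the limiting susceptible level lies strictly to the left of the peak. Everything else—the conservation computation, the unimodality and the branch selection—is routine scalar analysis.
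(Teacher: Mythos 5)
The paper does not actually prove Theorem \ref{ThViab}: it quotes it from \cite{AFG2021+} and explicitly omits the proof, noting that it ``is based on viability (state-constrained evolution) theory'' (viability kernels, invariance, normal-cone conditions on the constraint set $\{i\le i^*\}$). Your argument is therefore a genuinely different and more elementary route. You eliminate time through the strictly decreasing variable $s$, observe that $\frac{di}{ds}=-1+\frac{\gamma}{\beta(1-a)s}$ is monotone in the control, and deduce that $a\equiv 0$ (resp.\ $a\equiv\overline a$) is pointwise extremal among all measurable policies at each common susceptible level, so that membership in $\mathcal{G}$ (resp.\ $\mathcal{Y}$) reduces to the peak computation for a single constant-control trajectory; the first integral \eqref{Comp0} evaluated at the peak abscissa $\frac{\gamma}{\beta(1-a)}$ then yields exactly the transcendental equations defining $\phi$ and $\psi$, and the branch selection follows from the unimodality of $x\mapsto -x+\frac{\gamma}{\beta(1-a)}\log x$. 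This is sound, including the seemingly counterintuitive direction of the comparison (at a fixed $s$-level, \emph{less} confinement means \emph{more} infected, because less time has elapsed and fewer have recovered), and the obstacle you flag---different controls sweep different $s$-ranges---is real but closes exactly as you indicate: since $i^{s_0,i_0,a}(s)\le i^{s_0,i_0,0}(s)$ wherever both are defined and $i^{s_0,i_0,0}$ tends to $0$ at its limiting susceptible level, a more-confined trajectory cannot sweep past the limiting level of a less-confined one while still infected, so the swept ranges are nested and the suprema compare correctly. What the viability proof buys is robustness (no reliance on the planar structure, on the strict monotonicity of $s$, or on the explicit first integral); what your proof buys is self-containedness and, as a by-product, the extremal-trajectory picture that the paper itself reuses later: the reachable set $\mathcal{R}(s_0,i_0)$ of Section \ref{Section4} is bounded by precisely your two extremal branches $a=0$ and $a=\overline a$.

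One caveat: your handling of the case $i_0=0$ is off. For $i_0=0$ the trajectory is frozen, so by the definition every $(s_0,0)\in\mathbb{T}$ belongs to $\mathcal{G}$ and $\mathcal{Y}$, whereas the formulas \eqref{Green}--\eqref{Yellow} exclude $s_0>\phi(0)$ (resp.\ $s_0>\psi(0)$); your claim that this case is covered by $\phi(i_0)\ge\frac{\gamma}{\beta}\ge s_0$ fails there, since $s_0$ may well exceed $\frac{\gamma}{\beta}$. This is a degeneracy of the quoted statement itself (note the ``$0+$'' data tips in Figure \ref{Fig1}), so the clean fix is to state your equivalence for $i_0>0$ rather than to claim that $i_0=0$ is folded in.
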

\begin{example}\label{Exp1} {\em
Let us illustrate this on the example of a total population of $N=67,000,000$ individuals with a maximal tolerance of $400$ cases per $100,000$ habitants (rolling 7 days average) and an incubation-to-recovery time of $t_0=14$ days.  This leads to $I^*=t_0\times\frac{400}{100000}\times N=3,752,000$,\footnote{We are talking here of a level of infected individuals at some moment and not a cumulated number of infections.} i.e.,  a proportion $i^*=0.056$.  We pick $\beta=1/3$ (one infection out of three contacts approximately) and a recovery parameter $\gamma=\frac{1}{14}$ (recovery intervenes in an average of $14$ days). With these data,  cf.\ Figure \ref{Fig1},
\begin{enumerate}
\item a control (confinement) range in $\pp{0,60}$ percent is applied, i.e., $\overline a=0.6$;
\item for the green zone,  
\begin{enumerate}
\item one can start with $I^*$ infections for every $S_0\leq N\frac{\gamma}{\beta}=14,357,143$, that is, a proportion $s_0\leq \frac{3}{14}\simeq 0.214$, leading to the upper-left data tip $\pr{0.214,0.056}$;
\item as $I_0\rightarrow 0$, the green zone extends until $\overline S_{max}:=27,374,986$; equivalently, a proportion $\overline s_{max}\simeq 0.4085819$ leading to the lower-left data tip $\pr{0.409,0+}$;
\end{enumerate}
\item for the yellow zone,
\begin{enumerate}
\item one can start with $I^*$ infections for every $S_0\leq N\frac{\gamma}{\beta\pr{1-\overline a}}=35,892,857$, that is, a proportion $s_0\leq \frac{\gamma}{\beta\pr{1-\overline a}}\simeq 0.5357143$, i.e., the upper-right label $\pr{0.536,0.056}$; 
\item as $I_0\rightarrow 0$, the yellow zone extends until $\underline S_{max}\simeq 54,895,452$; equivalently, a proportion $\underline s_{max}\simeq 0.8193351$ leading to the lower-right data tip $\pr{0.819,0+}$;
\end{enumerate}
\begin{figure}[!t]
\centerline{\includegraphics[width=\columnwidth]{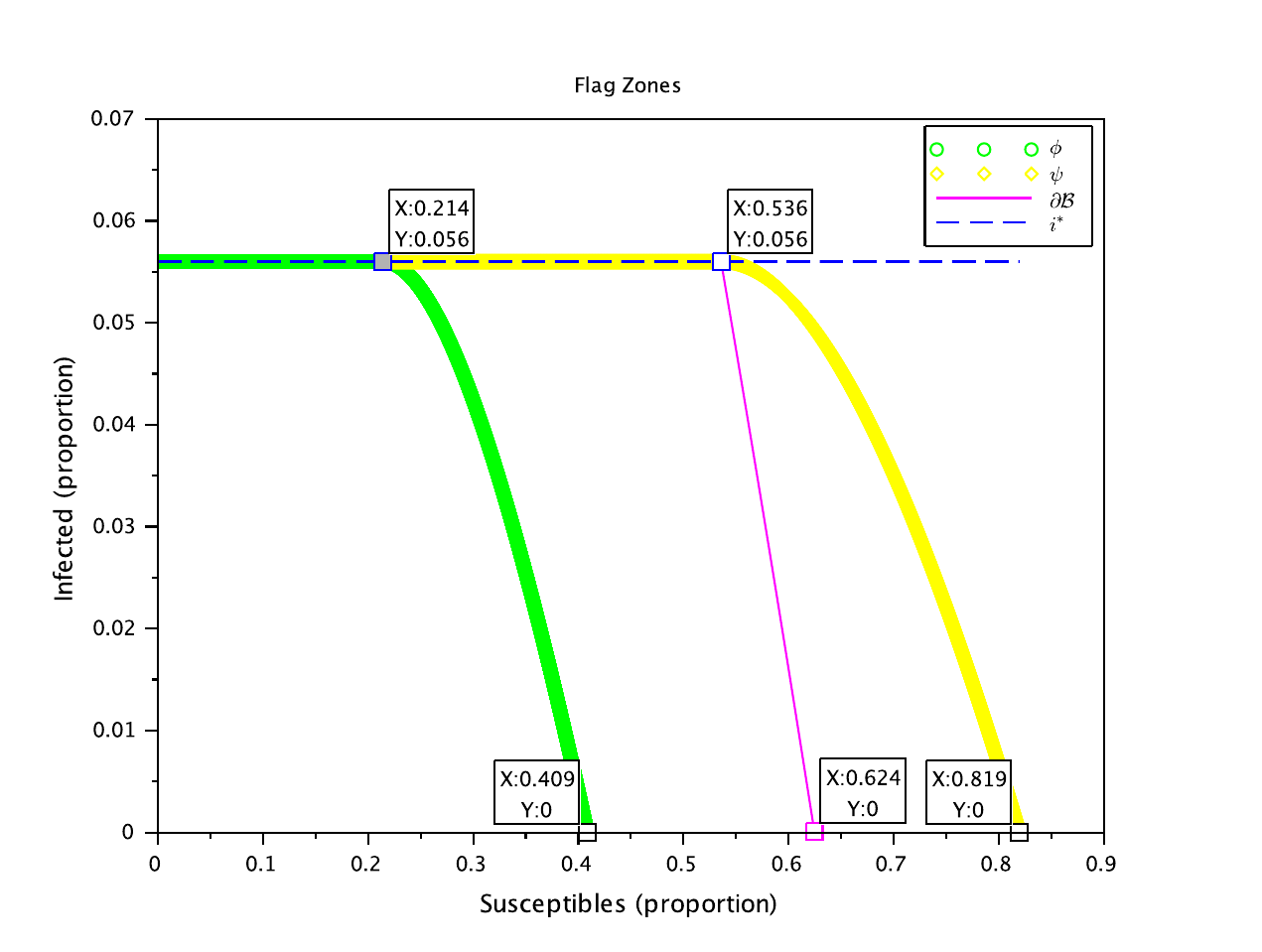}}
\caption{Flag zones for Example \ref{Exp1}: $N=67M$, $I^*=3.75M$, $\beta=\frac{1}{3}$,  $1-\overline a=40\%$, $ \gamma=\frac{1}{14}$.}
\label{Fig1}
\end{figure}
\item finally, with a slightly over-unitary contagion parameter $\beta=1.01$,  we get the following dramatic decrease on safe (green) and yellow (feasible) zones (cf.\ Figure \ref{Fig2})

\begin{figure}[!t]
\centerline{\includegraphics[width=\columnwidth]{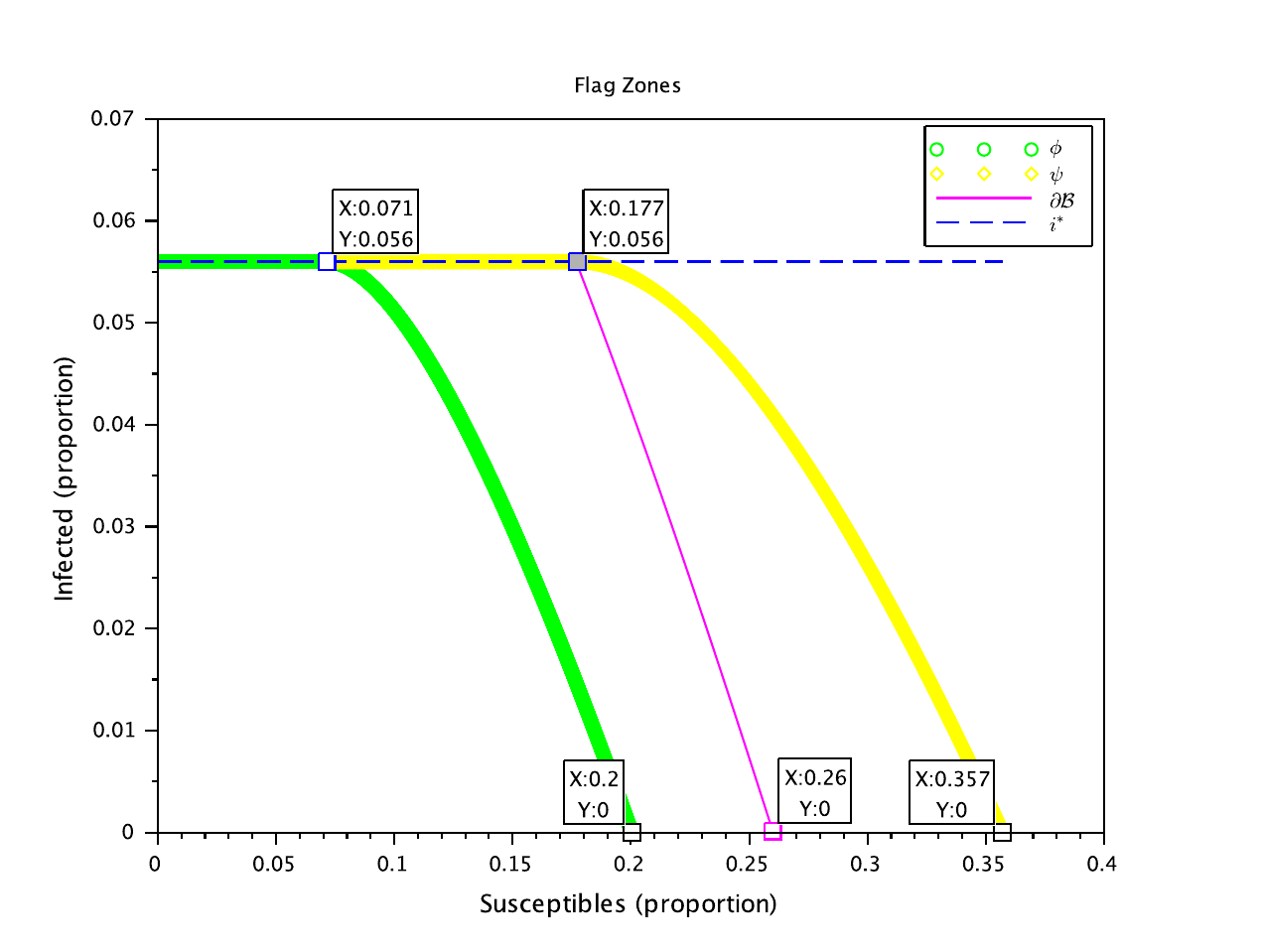}}
\caption{Flag zones for Example \ref{Exp1}: $N=67M$, $I^*=3.75M$, $\beta=1.01$,  $1-\overline a=40\%$, $ \gamma=\frac{1}{14}$.}
\label{Fig2}
\end{figure}
\end{enumerate}}
\end{example}
\subsection{Further geometric considerations}
At this point, the reader may be intrigued by the magenta curve in Figure \ref{Fig1}.  As it turns out, it corresponds to $$\mathbb{R}_+\ni t\mapsto \pr{s^{\frac{\gamma}{\beta(1-\overline a)},i^*,0}(-t),i^{\frac{\gamma}{\beta(1-\overline a)},i^*,0}(-t)},$$ i.e., initial positions from which the $0$-controlled trajectory of \eqref{SIR} reaches the point $\pr{\frac{\gamma}{\beta(1-\overline a)},i^*}$. Since \eqref{Comp0} holds for every $t\in\mathbb{R}$, by reversing time we obtain that an equivalent description of this curve is 
$$\set{\pr{\varphi(i),i}:\ i\in (0,i^*]},$$ where $x:=\varphi(i)\geq\frac{\gamma}{\beta(1-\overline a)}\geq \frac{\gamma}{\beta}$ satisfies $-x+\frac{\gamma}{\beta}\log x=i-i^*+\frac{\gamma}{\beta}\log\frac{\gamma}{\beta(1-\overline a)}-\frac{\gamma}{\beta(1-\overline a)}$.\\
In general,  let us define the set \begin{equation}\label{Bgen}\mathcal{B}^{\overline s}:=\set{\pr{s,i}:\ s\geq 0, \ i\in\pp{0,i^*},\ s\leq \varphi^{\overline s}(i)},
\end{equation}
where $x:=\varphi^{\overline s}(i)\textnormal{ satisfies }-x+\frac{\gamma}{\beta}\log x=i-i^*+\frac{\gamma}{\beta}\log{\overline s}-\overline s$, and  $\sign\pr{x-\frac{\gamma}{\beta}}=\sign\pr{\overline s-\frac{\gamma}{\beta}}$ with 
$\sign(t):=\frac{t}{|t|}$, if $t\neq 0$, $\sign(0):=1$.
In other words, $\varphi^{\overline s}$ is taken on the same side of $\frac{\gamma}{\beta}$ as $\overline s$. 
\begin{remark}\label{RemB}
\begin{enumerate}
\item The reader will have noticed that $\mathcal{B}^{\frac{\gamma}{\beta}}=\mathcal{G}$.
\item Furthermore, to avoid complicated notations, we drop the superscript when $\overline s=\frac{\gamma}{\beta(1-\overline a)}$, i.e., we let $\mathcal{B}:=\mathcal{B}^{\frac{\gamma}{\beta(1-\overline a)}}$. This equally explains the notation employed in Figure \ref{Fig1}. As it will be clear in the verification arguments, the boundary of $\mathcal{B}$ (represented in magenta in Figure \ref{Fig1}) delimits the zone in which strict confinement is required.
\item It is easy to show (please take a look at Proposition \ref{PropB}) that if $\pr{s_0,i_0}\in\mathcal{B}\setminus \mathcal{G}$, then, prior to hitting $i^*$, one has $\pr{s^{s_0,i_0,0}(t),i^{s_0,i_0,0}(t)}\in\mathcal{B}$. Furthermore, due to the maximality of $\mathcal{G}$ (as invariant set), it follows that there exists $t_0\in\mathbb{R}_+$ such that $\pr{s^{s_0,i_0,0}(t_0),i^{s_0,i_0,0}(t_0)}\in\big(\frac{\gamma}{\beta},\frac{\gamma}{\beta(1-\overline a)}\big]\times\set{i^*}$. 
\end{enumerate}
\end{remark}

\section{Optimality of greedy policies}\label{Section3}

\subsection{The control problem}

We now consider the following control problem.

\begin{problem}\label{CtrlProb0}
For every controllable initial configuration $\pr{s_0,i_0}\in\mathcal{Y}$, i.e., subject to $s_0\leq \psi\pr{i_0}$, minimize \begin{equation}\label{Cost0}\begin{split}
J(s_0,i_0,a)
:=\int_{0}^{\tau_{green}^{s_0,i_0,a}}l_1\pr{s^{s_0,i_0,a}(t),i^{s_0,i_0,a}(t),a(t)}dt=\int_0^\infty l_1\pr{s(t),i(t),a(t)}\mathbf{1}_{s(t)\geq \phi\pr{i(t)}}\,dt,
\end{split}\end{equation}
(in the second equation, we have dropped the superscript $(s,i):=\pr{s^{s_0,i_0,a},i^{s_0,i_0,a}}$;)\begin{itemize}
\item under the {\em state constraint} \begin{equation}\label{ICUConstr}
i^{s_0,i_0,a}(t)\leq i^*,\textnormal{ a.s.\ on }\mathbb{R}_+,\end{equation}
\item until reaching the {\em target} $\mathcal{G}$, i.e., until \begin{align}
\label{Target}\nonumber&\tau_{green}^{s_0,i_0,a}:=\inf\set{t\geq 0:\ s^{s_0,i_0,a}(t)< \phi\pr{i^{s_0,i_0,a}(t)}}.
\end{align}
\end{itemize}
\end{problem}
\begin{comment}
\begin{proposition}
For every $\pr{s_0,i_0}\in\mathcal{Y}$ (with $s_0i_0\neq 0$) and every admissible control $a\in\mathcal{A}d$, 
\end{proposition}
\begin{proof}
The reader is invited to note that, for every $a\in\mathcal{A}d$ as long as $i>0$, $s$ is strictly decreasing.  As a consequence, $\exists\underset{t\rightarrow}{\lim}\ s^{s_0,i_0,a}(t)$.
\end{comment}
\subsection{The Hamilton-Jacobi equation}
Before proceeding, let us explain the standing assumptions.
\begin{assumption}\label{Ass}
Throughout the rest of the paper, unless otherwise stated, the following assumptions hold true: \begin{itemize}
\item[i.] the cost $l_1$ is non-negative and lower-semicontinuous;  \item[ii.] for every $\pr{s,i}\in\mathbb{R}^2$, the map $\pp{0,\overline a}\ni a\mapsto l_1(s,i,a)$ is non-decreasing;
\item[iii.] the set $\set{l_1(s,i,a):\ a\in\pp{0,\overline a}}$ is convex, for every $\pr{s,i}\in\mathbb{R}^2$. 
\end{itemize}
\end{assumption}
\begin{remark}
1. The reader is invited to note that the second formulation in \eqref{Cost0} is independent of $\tau_{green}$.\\
{2. Please note that the main case of interest is the one in which $l_1$ is control-affine for which Assumption \ref{Ass} iii. is trivially satisfied.}\\
3. Sometimes it is convenient to assume that \begin{equation}\label{Assiv}
l_1(s,i,0)=0 \textnormal{ on }\mathcal{G}.
\end{equation}Then, owing to $a=0$ being optimal as soon as it is admissible (due to Assumption \ref{Ass} ii.), in such framework,  minimizing $J$ is equivalent to minimizing $$\tilde J(s_0,i_0,a):=\int_0^\infty l_1\pr{s^{s_0,i_0,a}(t),i^{s_0,i_0,a}(t),a(t)}dt.$$ The function $\tilde J$ is to be regarded as an infinite horizon discounted value function with discount parameter $q=0$. The arguments developed hereafter equally apply to $q\geq 0$.
\end{remark}

Heuristically speaking, the control Problem \ref{CtrlProb0}\footnote{seen as an infinite horizon $0$-discounted control problem} is connected with the Hamilton-Jacobi equation
\begin{equation}
\label{HJ}
\sup_{a\in\pp{0,\overline a}}\set{\begin{split}\partial_sV(s,i)\beta(1-a)si-\partial_iV(s,i)\pr{\beta(1-a)s-\gamma}i-l_1(s,i,a)\mathbf{1}_{s\geq \phi(i)}\end{split}}= 0,\end{equation}
i.e., provided that $l_1$ is lower-semicontinuous (l.s.c.)\ and bounded from below, $\mathcal{Y}\ni\pr{s,i}\mapsto V(s,i):=\inf_{a\in\mathcal{A}d}J(s,i,a)$ is the smallest l.s.c.\ viscosity super-solution of \eqref{HJ} (with various meanings). For semicontinuity results and connections with the Hamilton-Jacobi equation, the reader is referred to \cite{GS2011}, \cite{FrankowskaPlaskacz2000} (see also \cite{BFZ2011}).  For the continuity of $V$, we refer the reader to  \cite[Remark 5.7 assertion 5]{AFG2021+}.  For the connection with \cite{FrankowskaPlaskacz2000} and the inward pointing condition from \cite{Soner86_1}, we provide some elements in the Appendix \ref{App1}.
\subsection{A verification argument}
We now consider the "greedy" closed-loop feedback control \begin{equation}
\label{bGreed}
a^*\pr{s,i}:=\pp{\left(1-\frac{\gamma}{\beta s}\right)^+\wedge \overline a}\mathbf{1}_{\partial \mathcal{Y}^+}(s,i),
\end{equation}
where, for rigour, we set $\partial \mathcal{Y}^+$ to be the "active" boundary of the viable set $\mathcal{Y}$ i.e.  
\begin{equation}\label{dY+}
\partial \mathcal{Y}^+=\pr{\pp{\frac{\gamma}{\beta},\frac{\gamma}{\beta(1-\overline a)}}\times\set{i^*}}\cup\set{\pr{\psi(i),i}:\ 0<i\leq i^*}.
\end{equation}
\begin{remark}
This control is called greedy as it takes no action until forced (about to exit the yellow set) and, at this time, one takes minimal action guaranteeing that $\mathcal{Y}$ is not exited. {We recall that $a^+=\max\set{a,0}$ stands for the positive part and $a\wedge b=\min\set{a,b}$ is the minimum operator. between two real quantities In the interior of $\mathcal{Y}$ and on the set $\mathcal{G}$, no action ($a=0$) is taken. When the current position is at $\pr{\psi(i),i}$, the trajectory is kept on this boundary $\partial\mathcal{Y}^+$ with the only control available i.e.  $\overline{a}$ until the number of susceptibles reaches $\frac{\gamma}{\beta(1-\overline{a})}$. When the constraint on $i$ is saturated (only manageable for $s\in\big[\frac{\gamma}{\beta},\frac{\gamma}{\beta(1-\overline{a})}\big]$), the deconfinement is progressive ($a=1-\frac{\gamma}{\beta s}$).}
\end{remark}
\begin{lemma}\label{CompW}
Under the assumption \eqref{Assiv}, the function $\mathcal{Y}\cap\pr{\mathbb{R}_+^*}^2\ni\pr{s_0,i_0}\mapsto W(s_0,i_0):=\tilde{J}(s_0,i_0,a^*)$ is explicitly given by 
\begin{eqnarray*}
\textnormal{1)}\ &0, \mbox{ if }\pr{s_0,i_0}\in\mathcal{G},\\
\textnormal{2)}\ &\displaystyle\frac{1}{\gamma i^*}\int_{\frac{\gamma}{\beta}}^{s_1(s_0,i_0)}l_1\pr{u,i^*,1-\frac{\gamma}{\beta u}}du,\mbox{ if }\pr{s_0,i_0}\in\mathcal{B}\setminus\mathcal{G},\\
\textnormal{3)} &\begin{split}&\displaystyle\frac{1}{\beta(1-\overline a)}\int_{\frac{\gamma}{\beta(1-\overline a)}}^{s_2\pr{s_0,i_0}}
\frac{l_1\pr{s,\theta^*-s+\frac{\gamma}{\beta(1-\overline a)}\log s,\overline a}}{s\pr{\theta^*-s+\frac{\gamma}{\beta(1-\overline a)}\log s}}\,ds+W\pr{\frac{\gamma}{\beta(1-\overline a)},i^*},\ \mbox{if }\pr{s_0,i_0}\in\mathcal{Y}\setminus\mathcal{B},\end{split}
\end{eqnarray*}
where \begin{equation}\label{s1s2theta}\begin{cases}
\textnormal{(i)}\ &s_1(s_0,i_0)>\frac{\gamma}{\beta}\textnormal{ is the unique solution to }
s_1-s_0-i_0+i^*-\frac{\gamma}{\beta}\log\frac{s_1}{s_0}=0,\\
\textnormal{(ii)}\ &s_2(s_0,i_0)
:=\exp\pr{\frac{\beta(1-\overline a)}{\gamma\overline a}\pr{s_0+i_0-\frac{\gamma}{\beta}\log s_0-\theta^*}},\\
\textnormal{(iii)}\ &\theta^*:=i^*+\frac{\gamma}{\beta(1-\overline a)}-\frac{\gamma}{\beta(1-\overline a)}\log\frac{\gamma}{\beta(1-\overline a)}.
\end{cases}\end{equation}
\end{lemma}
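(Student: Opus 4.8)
The plan is to compute $W(s_0,i_0)=\tilde J(s_0,i_0,a^*)$ by following the closed-loop trajectory produced by the greedy feedback \eqref{bGreed} and integrating $l_1$ along it, exploiting that this trajectory is a concatenation of at most three arcs, on each of which the control is one of the explicit forms $a=0$, $a=1-\frac{\gamma}{\beta s}$ (on the flat part of $\partial\mathcal{Y}^+$) or $a=\overline a$ (on the lateral part $\set{\pr{\psi(i),i}}$ of $\partial\mathcal{Y}^+$, cf.\ \eqref{dY+}). On each arc the control is effectively constant, so the conserved quantity in \eqref{Comp0} applies piecewise; this is exactly what lets me turn $\int l_1\,dt$ into an explicit integral in the $s$-variable. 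Before any quadrature I fix the \emph{phase structure}: by Remark \ref{RemB}, for $\pr{s_0,i_0}\in\mathcal{B}\setminus\mathcal{G}$ the $0$-controlled motion rises to the flat boundary $\big(\tfrac{\gamma}{\beta},\tfrac{\gamma}{\beta(1-\overline a)}\big]\times\set{i^*}$, whereas for $\pr{s_0,i_0}\in\mathcal{Y}\setminus\mathcal{B}$ it first meets the lateral boundary; the constants $s_1,s_2,\theta^*$ of \eqref{s1s2theta} are precisely the coordinates of these hitting points, read off from the appropriate conserved quantity.

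Case 1 is immediate: $\mathcal{G}$ is $0$-invariant (its boundary being a $0$-controlled trajectory, cf.\ Theorem \ref{ThViab} and Remark \ref{RemB}), so the trajectory never reaches $\partial\mathcal{Y}^+$; hence $a^*\equiv0$ and $W=0$ by \eqref{Assiv}. The same remark disposes of \emph{every} maximal sub-arc on which $a^*\equiv0$: the feedback is switched off away from $\partial\mathcal{Y}^+$, and the uncontrolled arcs are costless because $l_1(\cdot,\cdot,0)=0$ there (on the target this is exactly \eqref{Assiv}; for the control-affine intervention costs of interest it holds throughout, no charge being incurred in the absence of intervention). Consequently the uncontrolled rise toward the active boundary does not contribute to $\tilde J$, and only the boundary arcs carry cost.

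On the flat arc (Case 2, and the tail of Case 3) the feedback $a^*=1-\frac{\gamma}{\beta s}$ forces $i\equiv i^*$ and $\dot s=-\gamma i^*$; substituting $dt=-\frac{ds}{\gamma i^*}$ and integrating from the entry abscissa $s_1$ down to the green corner $\frac{\gamma}{\beta}$ gives $\frac{1}{\gamma i^*}\int_{\gamma/\beta}^{s_1}l_1\big(r,i^*,1-\frac{\gamma}{\beta r}\big)\,dr$, which is the Case 2 formula; here $s_1$ solves \eqref{s1s2theta}(i) because it is the $0$-invariant through $\pr{s_0,i_0}$ evaluated at the level $i=i^*$. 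On the lateral arc (Case 3) the feedback $a^*=\overline a$ keeps the motion on the $\overline a$-invariant $s+i-\frac{\gamma}{\beta(1-\overline a)}\log s=\theta^*$, so $i=\theta^*-s+\frac{\gamma}{\beta(1-\overline a)}\log s$ and $\dot s=-\beta(1-\overline a)s\,i$; the same change of variables yields the stated integral with denominator $s\big(\theta^*-s+\frac{\gamma}{\beta(1-\overline a)}\log s\big)$, integrated from $s_2$ down to $\frac{\gamma}{\beta(1-\overline a)}$. The abscissa $s_2$ in \eqref{s1s2theta}(ii) is obtained by intersecting the $0$-invariant of the preceding rise with the $\overline a$-invariant $\theta^*$ (the $\log s$-coefficients differ by $\frac{\gamma\overline a}{\beta(1-\overline a)}$, which produces the exponential), while $\theta^*$ in \eqref{s1s2theta}(iii) is that $\overline a$-invariant evaluated at the tip $\big(\frac{\gamma}{\beta(1-\overline a)},i^*\big)$. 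At abscissa $\frac{\gamma}{\beta(1-\overline a)}$ the motion reaches this tip, where the flat regime takes over and the continuation cost is exactly $W\big(\frac{\gamma}{\beta(1-\overline a)},i^*\big)$ (a Case 2 value), so the recursion closes.

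The main obstacle is not the quadratures, which are routine once the arcs are identified, but the rigorous justification of the phase structure: one must verify that from each colour region the greedy trajectory reaches the expected component of $\partial\mathcal{Y}^+$ (flat for $\mathcal{B}\setminus\mathcal{G}$, lateral for $\mathcal{Y}\setminus\mathcal{B}$), in finite time and while preserving $i\le i^*$, and that the lateral arc terminates precisely at the tip $\big(\frac{\gamma}{\beta(1-\overline a)},i^*\big)$ where the flat regime resumes; all of this follows from the strict monotonicity of $s\mapsto s-\frac{\gamma}{\beta}\log s$ on $\big(\frac{\gamma}{\beta},\infty\big)$, the sign of $\dot i$ relative to $s=\frac{\gamma}{\beta}$, and Remark \ref{RemB}. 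A secondary delicate point is the change of variables itself, which divides by $\dot s\propto s\,i$: this is licit on the open arcs (where $i>0$) but degenerates at the green corner, so the finiteness of $W$ (integrability of the resulting improper integrals as $s\to\frac{\gamma}{\beta}$) must be recorded, together with the measurability and uniqueness of the closed-loop trajectory generated by the discontinuous feedback $a^*$.
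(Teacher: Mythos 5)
Your proposal is correct and follows essentially the same route as the paper's proof: the same decomposition of the greedy trajectory into a costless $0$-controlled arc followed by the boundary arcs (flat, then lateral), the same use of the conserved quantities \eqref{Comp0} to identify $s_1$, $s_2$ and $\theta^*$ (your "intersection of invariants" is exactly the paper's computation via $\psi$), and the same change of variables $dt=ds/\dot s$ to produce the quadratures and the recursion through $W\big(\tfrac{\gamma}{\beta(1-\overline a)},i^*\big)$. Your observation that the $0$-controlled arcs \emph{outside} $\mathcal{G}$ are costless only because $l_1(\cdot,\cdot,0)=0$ holds there for the intervention costs of interest (whereas \eqref{Assiv} literally covers only $\mathcal{G}$) is a point the paper's proof passes over silently when it writes $W(s_0,i_0)=W(s_1,i^*)$, so flagging it is a welcome precision rather than a deviation.
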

\begin{proof}
On $\mathcal{G}$, there is nothing to prove. \\
We first deal with the case when $\pr{s_0,i_0}\in \mathcal{B}\setminus\mathcal{G}$. The $0$ controlled trajectory reaches $\partial\mathcal{Y}^+$ at some point $\pr{s_1,i^*}$ (see Remark \ref{RemB}, item 3).  Using the computations on the constant controlled trajectories \eqref{Comp0}, it follows that \[s_1=s_0+i_0-i^*+\frac{\gamma}{\beta}\log\frac{s_1}{s_0}.\]
On the other hand, one explictly computes
$s^{s_1,i^*,a^*}(t)=s_1-\gamma i^*t$ prior to reaching $\frac{\gamma}{\beta}$ (at which point $\mathcal{G}$ has been reached). This leads to the explicit computation (see also Remark \ref{Ass}, 2.)
\begin{equation*}\begin{split}
W(s_0,i_0)&=W(s_1,i^*)=\tilde{J}(s_1,i^*,a^*)=\int_0^{\frac{s_1-\frac{\gamma}{\beta}}{\gamma i^*}}l_1\left(s_1-\gamma i^*t,i^*,1-\frac{\gamma}{\beta \pr{s_1-\gamma i^*t}}\right)dt.
\end{split}\end{equation*} The conclusion follows by using the change of variable $r(t):=s_1-\gamma i^* t$.\\
Let us now consider $\pr{s_0,i_0}\in\mathcal{Y}\setminus\mathcal{B}$. It is clear that one uses $0$ control to reach $\partial \mathcal{Y}$ at some point $\pr{s_2=\psi(i_2),i_2}$ after which, one uses $\overline a$ until reaching $\mathcal{B}$ through the entry point $\big(\frac{\gamma}{\beta(1-\overline a)},i^*\big)$. \\
As in the previous case, using the computations on the constant controlled trajectories \eqref{Comp0}, we have
\[s_2+i_2=s_0+i_0+\frac{\gamma}{\beta}\log\frac{s_2}{s_0}.\]Using the definition of $s_2=\psi(i_2)$(cf.\ Theorem \ref{ThViab}),  it follows that\begin{align*}&s_2+i_2=\frac{\gamma}{\beta(1-\overline a)}\log s_2+i^*+\frac{\gamma}{\beta(1-\overline a)}-\frac{\gamma}{\beta(1-\overline a)}\log\frac{\gamma}{\beta(1-\overline a)}=\frac{\gamma}{\beta(1-\overline a)}\log s_2+\theta^*.\end{align*}Combining the two, we get
\begin{equation}\label{s2i2}
\log s_2=\frac{\beta(1-\overline a)}{\gamma\overline a}\big(s_0+i_0-\frac{\gamma}{\beta}\log s_0-\theta^*\big).
%i_2&=\frac{1}{\overline a}\pr{s_0+i_0-\frac{\gamma}{\beta}\log s_0-(1-\overline a)\theta^*}-s_2.
\end{equation}
The $a^*$-controlled trajectory stays on $\partial\mathcal{Y}^+$ and reaches $\pr{\frac{\gamma}{\beta(1-\overline a)},i^*}$ in time $t$. On this set both $s$ and $i$ are monotone. It follows that\begin{align*}
J&:=\int_0^t l_1\pr{s^{s_2,i_2,\overline a}(u),i^{s_2,i_2,\overline a}(u),\overline a}du
=\frac{1}{\beta(1-\overline a)}\int_{\frac{\gamma}{\beta(1-\overline a)}}^{s_2}\frac{l_1\pr{s,\psi^{-1}(s),\overline a}}{s\psi^{-1}(s)}\,ds.
\end{align*}The definition of $\psi$ gives a simple definition for \begin{equation*}\begin{split}\psi^{-1}(s)=&i^*-s+\frac{\gamma}{\beta(1-\overline a)}\log s+\frac{\gamma}{\beta(1-\overline a)}-\frac{\gamma}{\beta(1-\overline a)}\log \frac{\gamma}{\beta(1-\overline a)}=\theta^*-s+\frac{\gamma}{\beta(1-\overline a)}\log s,
\end{split}\end{equation*}
 so that
$$J=\frac{1}{\beta(1-\overline a)}\int_{\frac{\gamma}{\beta(1-\overline a)}}^{s_2}\frac{l_1\pr{s,\theta^*-s+\frac{\gamma}{\beta(1-\overline a)}\log s,\overline a}}{s\pr{\theta^*-s+\frac{\gamma}{\beta(1-\overline a)}\log s}}\,ds.$$
The final expression for $W(s_0,i_0)$ follows then by observing that
\[W(s_0,i_0)=J+W\pr{\frac{\gamma}{\beta(1-\overline a)},i^*}.\]  The lemma is now proved owing to \eqref{s2i2}.
\end{proof}
\begin{remark}
The reader is invited to note that, by definition, $s_2=s^{s_0,i_0,{a^*}}(t_0)$, for some $t_0\geq 0$ such that $s_2(t)\in [\frac{\gamma}{\beta(1-\overline a)},s_0]$ for every $t\in\pp{0,t_0}$. As a consequence, $i^{s_0,i_0,a^*}(t)\geq i_0$ for every $t\in\pp{0,t_0}$. In particular, at $t_0$,
\begin{equation}
\label{estimi2}
\theta^*-s_2+\frac{\gamma}{\beta(1-\overline a)}\log s_2=\psi^{-1}(s_2)=i_2\geq i_0.
\end{equation}
\end{remark}
\begin{theorem}\label{ThMain}
The following propositions hold true.
\begin{enumerate}
\item  $\underset{\mathcal{B}\setminus\mathcal{G}\ni\pr{s_0,i_0}\rightarrow \pr{s',i'}\in\mathcal{G}}{\lim}{s_1(s_0,i_0)=\frac{\gamma}{\beta}}$; moreover, for $\pr{s_0,i_0}\in\mathcal{B}\setminus\mathcal{G}$ we have  $$\partial_{s}s_1(s_0,i_0)=\frac{\pr{\beta s_0-\gamma}s_1}{\pr{\beta s_1-\gamma}s_0},\quad \partial_{i}s_1(s_0,i_0)=\frac{\beta s_1}{\beta s_1-\gamma}.$$
\item $\underset{\mathcal{Y}\setminus\mathcal{B}\ni\pr{s_0,i_0}\rightarrow \pr{s',i'}\in\mathcal{B}}{\lim}{s_2(s_0,i_0)}=\frac{\gamma}{\beta(1-\overline a)}$; moreover, for $\pr{s_0,i_0}\in\mathcal{Y}\setminus\mathcal{B}$ we have \begin{align*}\partial_{s}s_2(s_0,i_0)=\frac{\beta(1-\overline a)}{\gamma \overline a}\pr{1-\frac{\gamma}{\beta s_0}}s_2,\ \partial_{i}s_2(s_0,i_0)=\frac{\beta(1-\overline a)}{\gamma\overline a}s_2.\end{align*} 
\item Under the Assumption \ref{Ass} and assuming \eqref{Assiv} to hold true and $l_1$ to be continuous, the function $W$ is continuous and differentiable almost everywhere on $\mathcal{Y}$.\footnote{Since the equation \eqref{HJ} usually concerns an equality condition on the interior $\overset{\circ}{\mathcal{Y}}$ and an inequality on the boundary $\partial\mathcal{Y}$, the "almost everywhere" here is intended with respect to the Lebesgue measure on $\mathbb{R}^2$ for the interior, respectively a suitable 1-dimensional measure on the regular curve describing $\partial \mathcal{Y}$.}
\item Let us introduce the function $\pr{\mathbb{R}_+^*}^2\times\left(0,\overline a\right]\ni(s,i,a)\mapsto \tilde{l}_1(s,i,a):=\frac{l_1(s,i,a)}{\gamma ia}$ and assume that \begin{align}\label{GenCond}\begin{cases}
\tilde{l}_1\pr{s_1(s_0,i_0),i^*,1-\frac{\gamma}{\beta s_1(s_0,i_0)}}&\leq  \underset{{a\in(0,\overline a]}}{\min}\tilde{l}_1(s_0,i_0,a),\ \mbox{if }\pr{s_0,i_0}\in\mathcal{B}\setminus\mathcal{G},\\
\tilde{l}_1\pr{s_2,\theta^*-s_2+\frac{\gamma}{\beta (1-\overline a)}\log s_2,\overline a}
&\leq \underset{{a\in (0,\overline a]}}{\min}\tilde{l}_1(s_0,i_0,a),\ \mbox{if }\pr{s_0,i_0}\in\mathcal{Y}\setminus\mathcal{B},\\
&\textnormal{where }s_2=s_2(s_0,i_0).
\end{cases}\end{align}Then $W$ satisfies the Hamilton-Jacobi equation \eqref{HJ} at all points where it is differentiable (hence almost everywhere). 
\end{enumerate}
\end{theorem}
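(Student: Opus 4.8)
The plan is to dispatch the four assertions in order, the first three being bookkeeping that feeds the genuine computation in (4). For assertion (1) I would regard $s_1$ as implicitly defined by $F(s_1;s_0,i_0):=s_1-s_0-i_0+i^*-\frac{\gamma}{\beta}\log\frac{s_1}{s_0}=0$ and apply the implicit function theorem: since $\partial_{s_1}F=1-\frac{\gamma}{\beta s_1}=\frac{\beta s_1-\gamma}{\beta s_1}\neq 0$ for $s_1>\frac{\gamma}{\beta}$, the formulas $\partial_{s_0}s_1=-\partial_{s_0}F/\partial_{s_1}F$ and $\partial_{i_0}s_1=-\partial_{i_0}F/\partial_{s_1}F$ reproduce the stated expressions after simplification. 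For the boundary limit I would rewrite the defining relation as $s_1-\frac{\gamma}{\beta}\log s_1=s_0+i_0-i^*-\frac{\gamma}{\beta}\log s_0$ and note that, at $\pr{s',i'}\in\partial\mathcal{G}$ (so $s'=\phi(i')$), the right-hand side collapses, via the defining equation of $\phi$ in \eqref{Green}, to $\frac{\gamma}{\beta}-\frac{\gamma}{\beta}\log\frac{\gamma}{\beta}$, which is exactly the minimal value of the strictly convex map $x\mapsto x-\frac{\gamma}{\beta}\log x$; hence the two roots merge and $s_1\to\frac{\gamma}{\beta}$. Assertion (2) is more direct because $s_2$ is explicit in \eqref{s1s2theta}(ii); the two partial derivatives follow by differentiating the exponential, and the limit $s_2\to\frac{\gamma}{\beta(1-\overline a)}$ comes from substituting the defining equation of $\varphi$ (the boundary of $\mathcal{B}$, see \eqref{Bgen}) into the exponent, which I expect to reduce precisely to $\log\frac{\gamma}{\beta(1-\overline a)}$.

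For assertion (3), on each open region ($\mathring{\mathcal{G}}$, $\mathcal{B}\setminus\mathcal{G}$, $\mathcal{Y}\setminus\mathcal{B}$) the function $W$ is $C^1$: on $\mathcal{G}$ it is constant, while on the other two it is the composition of a $C^1$ upper limit ($s_1$ or $s_2$, by (1)--(2)) with an integral of a continuous integrand (here continuity of $l_1$ is used, together with positivity of the denominator $\psi^{-1}(s)=\theta^*-s+\frac{\gamma}{\beta(1-\overline a)}\log s$, which equals the infection level and is therefore positive along the relevant arc). Continuity across the two interfaces is where the limits from (1)--(2) pay off: approaching $\partial\mathcal{G}$ forces $s_1\to\frac{\gamma}{\beta}$, so the $\mathcal{B}\setminus\mathcal{G}$-integral vanishes and matches $W\equiv 0$; approaching $\partial\mathcal{B}$ forces $s_2\to\frac{\gamma}{\beta(1-\overline a)}$, so the $\mathcal{Y}\setminus\mathcal{B}$-integral vanishes and $W\to W\big(\frac{\gamma}{\beta(1-\overline a)},i^*\big)$. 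For the latter to agree with the value computed from the $\mathcal{B}$-side I would check that $s_1\equiv\frac{\gamma}{\beta(1-\overline a)}$ all along $\partial\mathcal{B}$ --- this is the defining property of $\mathcal{B}$, whose boundary is the backward $0$-trajectory through $\big(\frac{\gamma}{\beta(1-\overline a)},i^*\big)$, and it can be reverified by plugging the $\varphi$-equation into the $s_1$-relation. Differentiability almost everywhere then follows because the only possible failures sit on the two boundary curves, which are Lebesgue-null in $\mathbb{R}^2$.

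The heart of the matter is assertion (4), and I would read \eqref{HJ} in its verification form $\inf_{a\in\pp{0,\overline a}}\set{l_1(s,i,a)\mathbf{1}_{s\geq\phi(i)}+\partial_sW\,\dot s+\partial_iW\,\dot i}=0$, with $\dot s=-\beta(1-a)si$ and $\dot i=(\beta(1-a)s-\gamma)i$. On $\mathring{\mathcal{G}}$ there is nothing to do: $\nabla W=0$ and $s<\phi(i)$ annihilate the bracket for every $a$. On $\mathcal{B}\setminus\mathcal{G}$ (where $\mathbf{1}_{s\geq\phi(i)}=1$) I would insert $\partial_{s}W=\frac{1}{\gamma i^*}l_1\big(s_1,i^*,1-\frac{\gamma}{\beta s_1}\big)\partial_{s}s_1$ and $\partial_{i}W=\frac{1}{\gamma i^*}l_1\big(s_1,i^*,1-\frac{\gamma}{\beta s_1}\big)\partial_{i}s_1$ from the fundamental theorem of calculus and (1), and compute $\nabla W\cdot(\dot s,\dot i)$; the expressions for $\partial_s s_1,\partial_i s_1$ are engineered so that the coefficient multiplying $l_1(s_1,\dots)$ collapses to $-\gamma i a$, yielding the clean identity $\nabla W\cdot(\dot s,\dot i)=-\gamma i a\,\tilde l_1\big(s_1,i^*,1-\frac{\gamma}{\beta s_1}\big)$. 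Consequently, for $a\in(0,\overline a]$ the whole bracket equals $\gamma i a\big[\tilde l_1(s,i,a)-\tilde l_1\big(s_1,i^*,1-\frac{\gamma}{\beta s_1}\big)\big]$, which is $\geq 0$ precisely by the first line of \eqref{GenCond}, while letting $a\downarrow 0$ drives it to $0$; since $l_1\geq 0$ also covers $a=0$, the infimum is $0$. The region $\mathcal{Y}\setminus\mathcal{B}$ is identical in spirit, now using the explicit $\partial_s s_2,\partial_i s_2$ from (2), the relation $\psi^{-1}(s_2)=\theta^*-s_2+\frac{\gamma}{\beta(1-\overline a)}\log s_2$, and the second line of \eqref{GenCond}; the same cancellation produces $\nabla W\cdot(\dot s,\dot i)=-\gamma i a\,\tilde l_1\big(s_2,\theta^*-s_2+\frac{\gamma}{\beta(1-\overline a)}\log s_2,\overline a\big)$ and the bracket is again $\gamma i a\big[\tilde l_1(s,i,a)-\tilde l_1(s_2,\dots,\overline a)\big]\geq 0$.

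I expect the main obstacle to be organisational rather than conceptual: carrying out the two gradient-times-drift computations and seeing the reduction of the bracket to $-\gamma i a$, which is exactly what converts the raw inequality into the normalized comparison of $\tilde l_1=\frac{l_1}{\gamma i a}$ appearing in \eqref{GenCond}. Two finer points deserve care. First, the Hamiltonian value is reached only in the limit $a\downarrow 0$ and not necessarily at the greedy value $a^*=0$ on the interior regions; the equation nonetheless holds because the bracket is nonnegative with infimum $0$, and $l_1\geq 0$ handles the endpoint $a=0$. Second, one must track the indicator $\mathbf{1}_{s\geq\phi(i)}$, which is $0$ on $\mathring{\mathcal{G}}$ and $1$ on both $\mathcal{B}\setminus\mathcal{G}$ and $\mathcal{Y}\setminus\mathcal{B}$; the curves $\partial\mathcal{G}$, $\partial\mathcal{B}$ and $\partial\mathcal{Y}$ are excluded by the ``almost everywhere'' clause, so no separate boundary verification is needed for the stated conclusion.
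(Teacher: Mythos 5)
Your proposal follows the same route as the paper's own proof: the same region-by-region computation of $\partial_s W$ and $\partial_i W$ via the chain rule through $s_1$ and $s_2$, the same collapse of the Hamiltonian bracket to $\gamma i a\bigl(\tilde l_1(s,i,a)-\tilde l_1(\cdot)\bigr)$, and the same appeal to \eqref{GenCond}, your $\inf$-formulation being just the sign-flipped version of the paper's $\sup$-formulation (and both treatments rely on the same implicit vanishing of $l_1(\cdot,\cdot,0)$ when concluding that the extremum is exactly zero). The only difference is one of detail: you spell out, via the implicit function theorem and the strict convexity of $x\mapsto x-\frac{\gamma}{\beta}\log x$, the derivative and limit computations for $s_1,s_2$ that the paper dismisses as ``simple consequences of the definitions.''
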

\begin{proof}
The first two assertions are simple consequences of the definitions in \eqref{s1s2theta} and those of the sets $\mathcal{G}$ and $\mathcal{B}$. \\
 Let us sketch the proof of the third assertion concerning $W$. The continuity is a mere consequence of the limiting behaviour of $s_1$ and $s_2$.  We wish to point out that the boundaries where differentiability issues may appear ($\set{\pr{\phi(i),i}:\ i\in\pp{0,i^*}}$, resp.\ $\{(\varphi^{\frac{\gamma}{\beta(1-\overline a}}(i),i):\ i\in\pp{0,i^*}\}$), are Lebesgue-null sets in $\mathbb{R}^2$. This type of consideration is also valid for $\partial \mathcal{Y}$ (which is 1-dimensional) on which differentiability may only fail at $\frac{\gamma}{\beta}$,  resp.\ $\frac{\gamma}{\beta(1-\overline a)}$. From now on, we leave aside these sets and prove assertion 4.  
On $\overset{\circ}{\mathcal{G}}$, there is nothing to prove (as $W$ is null). On $\mathcal{B}\setminus\pr{\{(\varphi^{\frac{\gamma}{\beta(1-\overline a)}}(i),i):\ i\in\left(0,i^*\right]\}\cup\mathcal{G}}$, one easily computes
\begin{eqnarray*}\partial_sW(s_0,i_0)&=&\frac{1}{\gamma i^*}l_1\pr{s_1,i^*,1-\frac{\gamma}{\beta s_1}}\frac{\pr{\beta s_0-\gamma}s_1}{\pr{\beta s_1-\gamma}s_0}=\tilde{l}_1\pr{s_1,i^*,1-\frac{\gamma}{\beta s_1}}\frac{\beta s_0-\gamma}{\beta s_0}, \\ \partial_iW(s_0,i_0)&=&\frac{1}{\gamma i^*}l_1\pr{s_1,i^*,1-\frac{\gamma}{\beta s_1}}\frac{\beta s_1}{\beta s_1-\gamma}=\tilde{l}_1\pr{s_1,i^*,1-\frac{\gamma}{\beta s_1}}.\end{eqnarray*} On $\mathcal{Y}\setminus\mathcal{B}$, one easily computes 
\begin{equation*}\begin{split}\partial_sW(s_0,i_0)
&=\frac{1}{\gamma\overline{a}}\frac{l_1\pr{s_2,\theta^*-s_2+\frac{\gamma}{\beta (1-\overline a)}\log s_2,\overline a}}{{\theta^*-s_2+\frac{\gamma}{\beta (1-\overline a)}\log s_2}}\pr{1-\frac{\gamma}{\beta s_0}}\\
&=\tilde l_1\pr{s_2,\theta^*-s_2+\frac{\gamma}{\beta (1-\overline a)}\log s_2,\overline a}\frac{\beta s_0-\gamma}{\beta s_0}, \\ 
\partial_iW(s_0,i_0)
&=\frac{1}{\gamma\overline{a}}\frac{l_1\pr{s_2,\theta^*-s_2+\frac{\gamma}{\beta (1-\overline a)}\log s_2,\overline a}}{{\theta^*-s_2+\frac{\gamma}{\beta (1-\overline a)}\log s_2}}=\tilde l_1\pr{s_2,\theta^*-s_2+\frac{\gamma}{\beta (1-\overline a)}\log s_2,\overline a}.\end{split}\end{equation*}
One deduces the following.\begin{enumerate}
\item On $\mathcal{B}\setminus\pr{\{(\varphi^{\frac{\gamma}{\beta(1-\overline a)}}(i),i):\ i\in\left(0,i^*\right]\}\cup\mathcal{G}}$, 
\begin{align*}
&\partial_sW(s_0,i_0)\beta(1-a)s_0i_0-\partial_iW(s_0,i_0)\pr{\beta(1-a)s_0-\gamma}i_0-l_1(s_0,i_0,a)=\\
=&i_0 a\pp{-\tilde l_1\pr{s_1,i^*,1-\frac{\gamma}{\beta s_1}}\pr{\beta s_0-\gamma}+\beta s_0\tilde l_1\pr{s_1,i^*,1-\frac{\gamma}{\beta s_1}}-\gamma\tilde l_1(s_0,i_0,a)}\\&+i_0\pp{\tilde l_1\pr{s_1,i^*,1-\frac{\gamma}{\beta s_1}}\pr{\beta s_0-\gamma}-\beta s_0\tilde l_1\pr{s_1,i^*,1-\frac{\gamma}{\beta s_1}}+\gamma \tilde l_1\pr{s_1,i^*,1-\frac{\gamma}{\beta s_1}}}\\
=&\gamma i_0a\pp{\tilde l_1\pr{s_1,i^*,1-\frac{\gamma}{\beta s_1}}-\tilde{l}_1(s_0,i_0,a)}.
\end{align*}
The conclusion follows using the assumption on $\tilde l_1$ (the coefficient of $a$ is non-positive such that the max is attained for $a=0$ and this maximum is $0$).
\item In the remaining case when $\pr{s_0,i_0}\in\mathcal{Y}\setminus\mathcal{B}$, the conclusion is obtained in exactly the same way; the only difference is that $\tilde l_1\pr{s_1,i^*,1-\frac{\gamma}{\beta s_1}}$ is replaced with $\tilde l_1\pr{s_2,\theta^*-s_2+\frac{\gamma}{\beta (1-\overline a)}\log s_2,\overline a}$ and one concludes using the second inequality in \eqref{GenCond}.
\end{enumerate}
\end{proof}
\begin{remark}
\begin{enumerate}
\item As a consequence, $a^*$ is (an) optimal control for Problem \ref{CtrlProb0} provided the uniqueness result in \cite[Theorem 11]{FrankowskaPlaskacz2000} holds true; please refer to the remarks in the Appendix \ref{A1_1}.  Roughly speaking, with the exception of $\big\{\big(\varphi^{\frac{\gamma}{\beta(1-\overline a)}}(i),i\big):\ 0<i\leq i^*\big\}$, $W$ is a regular (sub)solution of \eqref{HJ} and, thus, it does not exceed the supersolution $V$.  On the other hand, since $a^*$ is admissible, the reverse inequality is valid, hence providing the optimality of $a^*$.
\item The condition \eqref{GenCond} is not only sufficient but actually necessary (among the costs $l_1$ satisfying Assumption \ref{Ass} and \eqref{Assiv}) in order for $a^*$ to be optimal (or, equivalently, for $W$ to be a solution of \eqref{HJ}).
\end{enumerate}
\end{remark}
\begin{example}\label{ExpMain} {\em 
\begin{enumerate}
\item The typical example one has in mind is a multiplicative one\[l_1(s,i,a)=\lambda(s,i)a,\]for some non-negative, uniformly continuous function $\lambda:\mathbb{R}^2\longrightarrow\mathbb{R}_+$ such that 
\begin{itemize}
\item for every $i\in \mathbb{R}_+^*$, the map $\mathbb{R}_+^*\ni s\mapsto \lambda(s,i)$ is non-decreasing (the higher the number of susceptibles,  the higher the price);
\item for every $s\in \mathbb{R}_+^*$,  the map $\mathbb{R}_+^*\ni i\mapsto \frac{\lambda(s,i)}{i}$ is non-increasing.
\end{itemize}
In this case, $\tilde l_1(s,i,a)=\frac{\lambda(s,i)}{\gamma i}$.
Recalling that $s_1\leq s_0$ ($s_1$ is on a trajectory starting at $(s_0,i_0)$ and the $s$ component of such trajectories are non-increasing in time) and that $i_0\leq i^*$ for all $i_0\in\mathcal{Y}$, it follows that
$\tilde l_1\pr{s_1,i^*,1-\frac{\gamma}{\beta s_1}}=\frac{\lambda(s_1,i^*)}{\gamma i^*}\leq \frac{\lambda(s_0,i^*)}{\gamma i^*}\leq \frac{\lambda(s_0,i_0)}{\gamma i_0}$, thus implying the first condition in \eqref{GenCond}. The second condition in \eqref{GenCond} is similar and relies on \eqref{estimi2}.\\
We emphasize, for completeness sake, that the Assumption \ref{Ass} and the condition \eqref{Assiv} hold true for such functions. 
\item The case $l_1(s,i,a):=\lambda a$ studied in \cite{AFG2021+} is a special case of the previous one.  In fact, one can consider general contributions of type $\lambda i^\eta a$, for every $0\leq \eta\leq 1$ (eventually by slightly altering them around $0$ if Lipschitz continuity is desired). 
\end{enumerate}}
\end{example}
\begin{remark}
The non-increasing character of functions $i\mapsto \frac{\lambda(i)}{i}$ amounts to a sub-homogeneity requirement $\lambda(\alpha i)\leq \alpha\lambda(i),\ \forall \alpha\geq 1$. Concave functions $\lambda$ for which $\lambda(0)=0$ satisfy the above-mentioned condition at $i\geq 0$.
\end{remark}
\section{A short journey into discontinuous costs}\label{Section4}
Although the previous arguments give enough novelty with respect to our previous paper \cite{AFG2021+} and with respect to the current literature, let us explain the dual programming approach allowing one to compute the value function in semi-continuous settings. Numerical illustrations and analysis of the algorithm are left for a future work.\\
We wish to point out that, throughout the section,  the cost $l_1$ is assumed to be lower semi-continuous,  bounded from below and control independent, i.e.,  of the form $l_1(s,i)$.  This is a sufficient condition in order to guarantee the equality between strict and relaxed formulations, see, for instance,  \cite[Assumption I, Page 2494]{gaitsgory_quincampoix_09} and discussions following for an idea of more general settings.\footnote{The algorithm developed hereafter can be used for the relaxed problem (where an optimal measure exists).  The assumptions are only sufficient to guarantee that these optimal measures correspond to a control policy.  Of course, convexity in control of $l_1$ is also sufficient.}   
\subsection{The linear programming (LP) approach}
As we have already mentioned before, we deal with an infinite-horizon control problem for which we have chosen the discount parameter $q=0$. Hereafter, we consider a general discount $q> 0$ (the case $q=0$ can be obtained as a limit). To every initial data $\pr{s,i}\in\mathcal{Y}$ and every control $a_0\in\mathcal{A}d$, one associates \begin{itemize}
\item the so-called ($q$-discounted) occupation probability \[\mu^{s_0,i_0,a_0}(ds,di,da)=q\int_0^\infty e^{-qt}\mathbf{1}_{s^{s_0,i_0,a_0}(t)\in ds, i^{s_0,i_0,a_0}(t)\in di, a_0(t)\in da}\,dt;\]
\item the $T>0$ finite-horizon  couple of occupation probability measures
\begin{eqnarray*}
&&\mu_1^{s_0,i_0,a_0}(dt,ds,di,da):=\frac{1}{T}\displaystyle\int_{dt\cap\pp{0,T}}\mathbf{1}_{ s^{s_0,i_0,a_0}(t)\in ds, i^{s_0,i_0,a_0}(t)\in di, a_0(t)\in da}\,dt,\\
&&\mu_2^{s_0,i_0,a_0}(ds,di):=\delta_{\pr{s^{s_0,i_0,a_0}(T),i^{s_0,i_0,a_0}(T)}}(ds,di).
\end{eqnarray*}
\end{itemize}The viability condition of $\mathcal{Y}$ and the dynamic formula for test functions $f\in C^1\pr{\mathbb{R}^2}$ applied to the flow $\pr{s^{s_0,i_0,a_0},i^{s_0,i_0,a_0}}$ yields the following definition of the measure set of constraints \begin{equation}
\label{Theta}
\begin{split}
&\Theta\pr{s_0,i_0}:=\\
&\set{\begin{split}&\mu\in\mathcal{P}\pr{\mathcal{Y}\times \pp{0,\overline a}}:\ \forall f\in C^1\pr{\mathbb{R}^2}\\ &qf(s_0,i_0)+\int_{\mathcal{Y}\times \pp{0,\overline a}}\set{\begin{split}&\pr{-\partial_sf(s,i)}\beta(1-a)si\\&+\partial_if(s,i)\pr{\beta(1-a)s-\gamma}i-qf(s,i)\end{split}}\mu(ds,di,da)=0\end{split}},
\end{split}\end{equation}respectively
\begin{equation}
\label{ThetaT}
\begin{split}
&\Theta_{t_0,T}\pr{s_0,i_0}:=\\
&\set{\begin{split}&\mu:=\pr{\mu_1,\mu_2}\in\mathcal{P}\pr{\pp{t_0,T}\times\mathcal{Y}\times \pp{0,\overline a}}\times \mathcal{P}\pr{\mathcal{Y}}:\ \forall f\in C^1\pr{\mathbb{R}^3}\\ 
&\int_{\mathcal{Y}}f(T,s,i)\mu_2(ds,di)-f(t_0,s_0,i_0)=\\
&=T\int_{\pp{t_0,T}\times\mathcal{Y}\times \pp{0,\overline a}}\set{\begin{split}&\partial_tf(t,s,i)-\partial_sf(t,s,i)\beta(1-a)si\\&+\partial_if(t,s,i)\pr{\beta(1-a)s-\gamma}i\end{split}}\mu_1(dt,ds,di,da)\end{split}}.
\end{split}\end{equation}As usual, given a (subset of a) metric space $S$, $\mathcal{P}(S)$ stands for the family of probability measures on $S$ endowed with the Borel $\sigma$-field.
The reader is referred to \cite{gaitsgory_quincampoix_09} (see also \cite{Goreac_Ivascu_13} for the finite horizon setting or \cite{G2}) for the structure of such sets. In particular (e.g. \cite[Eq. (4.20)]{gaitsgory_quincampoix_09}, see also \cite[Proposition 1]{Goreac_Ivascu_13}), $\Theta\pr{s_0,i_0}$ is the closure of the convex hull of occupation measures as described before and, for every continuous cost $l_1$,  \begin{align*}qV(s_0,i_0)&=q\inf_{a\in\mathcal{A}d}\int_{\mathbb{R}_+}e^{-qt}l_1\pr{s^{s_0,i_0,a}(t),i^{s_0,i_0,a}(t)}dt\\&=\Lambda(s_0,i_0):=\inf_{\mu\in\Theta(s_0,i_0)}\underset{\mathcal{Y}}{\int}l_1(s,i)\mu(ds,di,\pp{0,\overline a}).\end{align*}A similar property holds true for $\Theta_{t_0,T}\pr{s_0,i_0}$ but, for our readers' sake, we will skip unnecessary details.
The dual value (obtained by standard duality algorithms e.g.\ in \cite{G2} or Hamilton-Jacobi methods e.g.\ in \cite[Lemma 3.2]{gaitsgory_quincampoix_09}, \cite[Appendix]{Goreac_Ivascu_13}) can be written in numerous ways:
\begin{enumerate}
\item (cf.\ \cite[Eq (3.1)]{gaitsgory_quincampoix_09})\[\begin{split}\Lambda(s_0,i_0)&=\Lambda^*(s_0,i_0)\\
=&\sup\set{\begin{split}&\eta\in\mathbb{R}:\ \exists f\in C^1\ s.t.\ \ \forall \pr{s,i,a}\in\mathcal{Y}\times \pp{0,\overline a}\\ 
& \left.\begin{split}\eta\leq&-\partial_sf(s,i)\beta(1-a)si+\partial_if(s,i)\pr{\beta(1-a)s-\gamma}i\\&+q\pr{f(s_0,i_0)-f(s,i)}+l_1(s,y)\end{split}\right.
\end{split}}.\end{split}\]
Let us point out that the first equality (see the cited reference) gives the no-gap property.
\item (cf. \cite[Proposition 12]{Serea_Quincampoix_2009} rewriting the original \cite[Lemmas 2.6, 2.7]{barles_jakobsen_02})
\begin{equation}\label{Lambda*2}\Lambda^*(s_0,i_0)
=\sup\set{qf(s_0,i_0):\ f\in \mathbf{\Phi}},
\end{equation}where \begin{equation}\label{F}
\mathbf{\Phi}:=\set{\begin{split} &f\in C^1\ : \ \forall \pr{s,i,a}\in\mathcal{Y}\times \pp{0,\overline a}\\ 
&-\partial_sf(s,i)\beta(1-a)si+\partial_if(s,i)\pr{\beta(1-a)s-\gamma}i-qf(s,i)+l_1(s,i)\leq 0\end{split}}.
\end{equation}
\end{enumerate}
Using the dynamic programming principle for lower semi-continuous functions (given as in \cite[Theorem 14 2.]{Goreac_Ivascu_13} \footnote{The original reference concerns the finite time horizon, but it is easily adaptable to this discounted setting.}), one gets
\begin{proposition}\label{PropDPP}
\begin{enumerate} 
\item If $l_1$ is lower semi-continuous, then the following dynamic programming principle holds true:
\begin{equation}\label{DPP}\Lambda^*(s_0,i_0)=\inf_{\mu=\pr{\mu_1,\mu_2}\in\Theta_{0,T}\pr{s_0,i_0}}\set{\begin{split}&Tq\int_{\pp{0,T}\times\mathbb{R}^2\times \pp{0,\overline a}}e^{-qt}l_1\pr{s,i}\mu_1\pr{dt,ds,di,da}
\\&+e^{-qT}\int_{\mathcal{Y}}\Lambda^*(s,i)\mu_2(ds,di)
\end{split}}.
\end{equation}
\item If $l_1$ is continuous, the following (modified) Dual DPP holds true
\begin{equation}\label{DDPP}\Lambda^*(s_0,i_0)=\inf_{\mu=\pr{\mu_1,\mu_2}\in\Theta_{0,T}\pr{s_0,i_0}}\set{\begin{split}&Tq\int_{\pp{0,T}\times\mathbb{R}^2\times \pp{0,\overline a}}e^{-qt}l_1\pr{s,i}\mu_1\pr{dt,ds,di,da}\\
&+e^{-qT}q\,\underset{f\in\mathbf{\Phi}}{\sup}\int_{\mathcal{Y}}f(s,i)\mu_2(ds,di)
\end{split}}.
\end{equation}

\end{enumerate}
\end{proposition}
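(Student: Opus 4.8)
The plan is to deduce both assertions from the linearized, semicontinuous dynamic programming principle, reading the present $q$-discounted infinite-horizon problem as a two-stage problem on $\pp{0,T}$ followed by a tail. For assertion (1) I would invoke \cite[Theorem 14 2.]{Goreac_Ivascu_13}: although stated for a finite horizon, its proof (the Krylov \emph{shaking of coefficients} argument sketched in Appendix \ref{A2}) transfers once $Tq\,e^{-qt}\,d\gamma_1$ is read as the running weight on $\pp{0,T}$ and $e^{-qT}d\gamma_2$ as the terminal weight, the discount merely contributing the factors $e^{-qt}$ and $e^{-qT}$. The identity \eqref{DPP} then splits into two inclusions. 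For ``$\le$'' I would start from an almost optimal discounted occupation measure for $\Lambda^*(s_0,i_0)$, restrict it to $\pp{0,T}$ (yielding, after normalization, the $\gamma_1$ component) and record the terminal marginal $\gamma_2=\delta_{\pr{s(T),i(T)}}$; feasibility in $\Theta_{0,T}\pr{s_0,i_0}$ is exactly the dynamic formula \eqref{ThetaT}, and the cost splits additively, the tail being bounded below by $\int_{\mathcal{Y}}\Lambda^*\,d\gamma_2$. For ``$\ge$'' I would conversely concatenate a near-optimal pair $\pr{\gamma_1,\gamma_2}\in\Theta_{0,T}\pr{s_0,i_0}$ with near-optimal continuations issued from the support of $\gamma_2$, via a measurable selection of near-optimizers and the lower semicontinuity of $l_1$; the closedness and convexity of $\Theta$ recalled after \eqref{ThetaT} keep the glued object admissible.

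For assertion (2) the idea is to substitute the dual representation \eqref{Lambda*2}, $\Lambda^*(s,i)=q\sup_{f\in\mathbf{\Phi}}f(s,i)$, into the terminal term of \eqref{DPP}. One inequality is immediate: since $\int_{\mathcal{Y}}\Lambda^*\,d\gamma_2=q\int_{\mathcal{Y}}\pr{\sup_{f\in\mathbf{\Phi}}f}\,d\gamma_2\ge q\sup_{f\in\mathbf{\Phi}}\int_{\mathcal{Y}}f\,d\gamma_2$, the objective in \eqref{DDPP} never exceeds that in \eqref{DPP}, so the infimum in \eqref{DDPP} is $\le\Lambda^*(s_0,i_0)$. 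The reverse inequality is where the work lies, and the clean route is a weak-duality pairing: for every $f\in\mathbf{\Phi}$ and every $\pr{\gamma_1,\gamma_2}\in\Theta_{0,T}\pr{s_0,i_0}$, testing the constraint in \eqref{ThetaT} against $g(t,s,i):=e^{-qt}f(s,i)$ and using the pointwise subsolution inequality \eqref{F} in the variable $a$ yields (the direction being dictated by weak duality, $qf(s_0,i_0)\le\int l_1\,d\gamma$)
\[ qf(s_0,i_0)\le Tq\int_{\pp{0,T}\times\mathcal{Y}\times\pp{0,\overline a}} e^{-qt}l_1\,d\gamma_1 + e^{-qT}q\int_{\mathcal{Y}} f\,d\gamma_2. \]
Bounding $\int_{\mathcal{Y}}f\,d\gamma_2\le\sup_{f\in\mathbf{\Phi}}\int_{\mathcal{Y}}f\,d\gamma_2$ on the right and then taking the supremum over $f\in\mathbf{\Phi}$ on the left turns this into the statement that $\Lambda^*(s_0,i_0)$ does not exceed the objective of \eqref{DDPP} evaluated at $\gamma$, uniformly in $\gamma$; taking the infimum over $\gamma$ closes the equality.

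A more geometric cross-check I would mention is exactness on Dirac marginals: the infimum in \eqref{DPP} is realized along genuine occupation measures, for which $\gamma_2=\delta_{\pr{s(T),i(T)}}$, and on a point mass the interchange is an equality, $\sup_{f}\int f\,d\gamma_2=\sup_f f\pr{s(T),i(T)}=\tfrac1q\Lambda^*\pr{s(T),i(T)}$. Since the continuity of $l_1$ assumed in part (2) yields continuity of $\Lambda^*$ (the no-gap property $\Lambda=\Lambda^*$ together with the \cite[Remark 5.7]{AFG2021+}-type regularity), the two objectives coincide on the generating measures, so their infima agree.

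The main obstacle is precisely this interchange of $\sup_{f\in\mathbf{\Phi}}$ with integration against $\gamma_2$. Because $\gamma_2\mapsto\sup_{f}\int f\,d\gamma_2$ is \emph{convex} (a supremum of linear functionals), its infimum over the \emph{convex} set $\Theta_{0,T}\pr{s_0,i_0}$ need not be attained at the extreme, Dirac marginals, so the Dirac-exactness argument alone does not control non-extremal barycentric $\gamma_2$. This is exactly why the weak-duality pairing is the decisive step: the one-sided bound it provides holds for every admissible $\gamma$, extremal or not, and thus delivers the missing inequality uniformly. The remaining care is regularity: one must ensure that \eqref{Lambda*2} genuinely represents $\Lambda^*$ (strong duality / no gap) and that the substituted terminal functional is measurable and integrable against $\gamma_2$, both of which follow from the continuity of $l_1$ and the structure of $\Theta$ recalled after \eqref{ThetaT}.
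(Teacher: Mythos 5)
Your proof is correct, and for assertion (2) it takes a genuinely different route from the paper's. The paper (Appendix \ref{A2}) proves the nontrivial inequality by re-running Krylov's shaking of coefficients on the penalized unconstrained problems $V_n$ with cost $l_1^n=l_1+n\pr{d_{\mathcal{Y}}\wedge 1}$: it produces smooth $f_n:=v_n^{1/n^2}\in\mathbf{\Phi}$ with $V_n\leq f_n+C\frac{1+n}{n^2}$ uniformly, and, since $\Lambda^*=q\sup_n V_n$ on $\mathcal{Y}$, integrating against $\gamma_2$ and letting $n\rightarrow\infty$ yields the interchange inequality $\int_{\mathcal{Y}}\Lambda^*\,d\gamma_2\leq q\sup_{f\in\mathbf{\Phi}}\int_{\mathcal{Y}}f\,d\gamma_2$ for \emph{every} admissible $\gamma_2$; combined with the trivial reverse bound, the \eqref{DPP} and \eqref{DDPP} objectives then coincide pointwise in $\gamma$, hence so do their infima. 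You bypass the interchange entirely: testing \eqref{ThetaT} with $e^{-qt}f(s,i)$ and using the pointwise $\mathbf{\Phi}$-inequality gives $qf(s_0,i_0)\leq$ the \eqref{DDPP} objective at each $\gamma$, and taking $\sup_{f\in\mathbf{\Phi}}$ via \eqref{Lambda*2} closes the hard direction with no regularization. What each buys: your argument is shorter, but it leans on \eqref{Lambda*2} as a black box (legitimate, since the paper states it as known), and that citation encapsulates the same Krylov machinery, so the analytic burden is displaced rather than removed; the paper's heavier argument proves strictly more, namely that the terminal functional $q\sup_{f}\int f\,d\gamma_2$ equals $\int\Lambda^*\,d\gamma_2$ on all admissible $\gamma_2$, which is what gets iterated in the multi-stage backward steps (Remark \ref{remms}). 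Two caveats you handled implicitly but should state: (i) your pairing inequality requires the subsolution reading of $\mathbf{\Phi}$, i.e.\ $0\leq -\partial_s f\,\beta(1-a)si+\partial_i f\pr{\beta(1-a)s-\gamma}i-qf+l_1$; the inequality as printed in \eqref{F} is inverted (as printed, $\mathbf{\Phi}$ would be stable under adding positive constants and \eqref{Lambda*2} would fail), and you correctly took the direction dictated by weak duality; (ii) in assertion (1), the concatenation must be done at the level of measures (gluing the $e^{-qt}$-reweighted $\gamma_1$ with a measurable selection of $\varepsilon$-optimal elements of $\Theta(s,i)$ integrated against $\gamma_2$), since a generic element of $\Theta_{0,T}\pr{s_0,i_0}$ is not the occupation pair of a single trajectory; the linearity of the constraints makes this glued measure admissible, so your sketch is sound.
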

The proof is forwardly inspired by the arguments in \cite[Theorem 14 2.]{Goreac_Ivascu_13} and the characterizations of $\Theta$ and $\Theta_{t_0,T}$.  We provide some elements of proof regarding particularly the second formulation in the appendix \ref{A2}.

Furthermore, the compactness of the set $\Theta_{0,T}\pr{s_0,i_0}$ with respect to the weak* convergence of probability measures,
 and the weak* lower semi-continuity   of the functionals 
\begin{eqnarray*}
&&\mu\mapsto \theta(\mu):=Tq\int_{\pp{0,T}\times\mathbb{R}^2\times \pp{0,\overline a}}e^{-qt}l_1\pr{s,i}\mu_1\pr{dt,ds,di,da}+e^{-qT}\int_{\mathcal{Y}}\Lambda^*(s,i)\mu_2(ds,di),\\
&&\mu\mapsto \tilde\theta(\mu):=Tq\int_{\pp{0,T}\times\mathbb{R}^2\times \pp{0,\overline a}}e^{-qt}l_1\pr{s,i}\mu_1\pr{dt,ds,di,da}
+e^{-qT}q\,\underset{f\in\mathbf{\Phi}}{\sup}\int_{\mathcal{Y}}f(s,i)\mu_2(ds,di),
\end{eqnarray*}
 show that the infimum  in  problems  \eqref{DPP} and \eqref{DDPP} is actually a minimum.
\subsection{The dual LP algorithm}
We are now equipped with all the elements allowing one to state the problem as a two-stage linear problem (in the spirit of \cite{Pereira1991} or \cite{Shapiro_2011}).
\begin{problem}
Solve the two stage problem
\begin{itemize}
\item[] 
\begin{equation*}
\overset{\rightarrow}{\mathbb{F}}(s_0,i_0):=\min\big\{e^{-qt}\scal{Tql_1(s,i),\mu_1}+\langle e^{-qT}\overset{\leftarrow}{\mathbb{B}}(s,i),\mu_2\rangle\big\}
\end{equation*}
where the minimum is taken over all measures $\mu=(\mu_1,\mu_2)$ with 
$\supp(\mu)\subseteq\pp{t_0,T}\times\mathcal{Y}\times \pp{0,\overline a}\times \mathcal{Y}$ and such that 
\begin{equation*}\begin{split}
&\scal{f(T,s,i),\mu_2}-f(0,s_0,i_0)\\
&=T\scal{\partial_tf(t,s,i)-\partial_sf(t,s,i)\beta(1-a)si+\partial_if(t,s,i)\pr{\beta(1-a)s-\gamma}i,\mu_1},
\end{split}\end{equation*}
for every $f\in C^1\pr{\mathbb{R}^3}$;
\item[] $$\overset{\leftarrow}{\mathbb{B}}(s,i):=\sup_{f\in C^1} \set{qf(s,i)+\inf_{\pr{s',i'}\in\mathcal{Y}}\pp{\begin{split}&-\partial_sf(s',i')\beta(1-a)s'i'+\partial_if(s',i')\pr{\beta(1-a)s'-\gamma}i'\\
&-qf(s',i')+l_1(s',i')\end{split}}}.$$
\end{itemize}
\end{problem}
\begin{remark}\label{Rem0}
\begin{enumerate}\item The spaces over which one integrates and the integration parameters have been dropped to simplify the presentation. Furthermore, $\scal{\cdot,\cdot}$ denotes the usual duality pairing between measurable functions and measures.
\item Provided a regular candidate $\hat b$ for $\overset{\leftarrow}{\mathbb{B}}$, the forward formulation can be transformed into a dual one \begin{equation*}\begin{split}
&\overset{\leftarrow}{\mathbb{B}}^0(s_0,i_0):=\\&\sup\set{\begin{split}&\eta\in\mathbb{R}:\ \exists\, f\in C^1\pr{\mathbb{R}^2}\ s.t.\ \forall (t,s',i',s,i,a)\in\pp{0,T}\times\mathcal{Y}^2\times\pp{0,\overline a}\\
&\eta\leq \set{\begin{split}&Te^{-qt}\pp{-\partial_sf(s,i)\beta(1-a)si+\partial_if(s,i)\pr{\beta(1-a)s-\gamma}i+ql_1(s,i)}\\
&-qTe^{-qt}f(s,i)+f(s_0,i_0)-e^{-qT}f(s',i')+e^{-qT}\hat{b}(s',i')
\end{split}}\end{split}}.
\end{split}\end{equation*}or, again,
\begin{equation*}
\overset{\leftarrow}{\mathbb{B}}^{00}(s_0,i_0):=\underset{\begin{split}&f\in\mathbf{\Phi},\ qf\geq \hat{b}\end{split}}{\sup}\scal{qf(s,i),\delta_{\pr{s_0,i_0}}}.
\end{equation*}
\end{enumerate}
\end{remark}

As it is usually the case, one projects the problem onto some convenient space that is dense in $C^1$ and for which computations are simplified. Since we are looking for optimal measures that are traditionally best fitted by their moments, we will work with polynomial functions $\mathbb{R}_k\pp{t,s,i}$, where $k\in\mathbb{N}^*$ large enough. For the density of such functions, the reader uses Nachbin's theorem applied to monomials (see \cite[Chapter 1]{Llavona_1986}). As such, we are morally dealing with continuous functions, for which one will use the second form of the dynamic programming principle in Proposition \ref{PropDPP} leading to
\begin{problem}\label{FB1pb}
Solve the two stage problem
\begin{itemize}
\item[] 
\begin{equation*}
\overset{\rightarrow}{\mathbb{F}}(s_0,i_0):=\min\big\{Tq\langle e^{-qt}l_1(s,i),\mu_1\rangle+e^{-qT}\overset{\leftarrow}{\mathbb{B}}_1\pr{\mu_2}\big\}
\end{equation*}
where the minimum is taken over all measures $\mu=(\mu_1,\mu_2)$ with 
$\supp(\mu)\subseteq\pp{t_0,T}\times\mathcal{Y}\times \pp{0,\overline a}\times \mathcal{Y}$ and such that 
\begin{equation*}\begin{split}
&\scal{f(T,s,i),\mu_2}-f(0,s_0,i_0)\\&=T\scal{\partial_tf(t,s,i)-\partial_sf(t,s,i)\beta(1-a)si+\partial_if(t,s,i)\pr{\beta(1-a)s-\gamma}i,\mu_1},
\end{split}\end{equation*}
for every $f\in C^1\pr{\mathbb{R}^3}$;
\item[] $$\overset{\leftarrow}{\mathbb{B}}_1(\mu_2):=\sup_{f\in \mathbf{\Phi}}\scal{qf(s,i),\mu_2}.$$
\end{itemize}
%
%\begin{equation*}\begin{split}
%\overset{\rightarrow}{\mathbb{F}}_1(s_0,i_0):=&Min \set{Tq\scal{e^{-qt}l_1(s,i),\mu_1}+e^{-qT}\overset{\leftarrow}{\mathbb{B}}_1\pr{\mu_2}};\\
%\textnormal{s.t. } &Supp(\mu)=\pp{t_0,T}\times\mathcal{Y}\times \pp{0,\overline a}\times \mathcal{Y}:\ \forall f\in C^1\pr{\mathbb{R}\times\mathbb{R}^2;\mathbb{R}}\\ 
%&\scal{f(T,s,i),\mu_2}-f(0,s_0,i_0)
%\\&=T\scal{\pr{\partial_tf(t,s,i)-\partial_sf(t,s,i)}\beta(1-a)si+\partial_if(t,s,i)\pr{\beta(1-a)s-\gamma}i,\mu_1};\\
%\overset{\leftarrow}{\mathbb{B}}_1(\mu_2):=&\sup_{f\in \mathbf{\Phi}}\scal{qf(s,i),\mu_2}.
%\end{split}\end{equation*}
\end{problem}

\begin{remark}\label{remms}
The reader is invited to note the formulation $\overset{\leftarrow}{\mathbb{B}}_1$ and the relation with $\overset{\leftarrow}{\mathbb{B}}^{00}$ in Remark \ref{Rem0}, especially for multi-stage problems when the backward step can be repeated in order to provide a lower estimate for the value function at $(s_0,i_0)$.
\end{remark}
Let us now describe an approximation scheme for Problem \ref{FB1pb}.
\begin{enumerate}
\item One begins with recalling the class of polynomials with multi-index of length at most $r\geq 1$, $\mathbb{R}_r\pp{s,i,a}\subset\mathbb{R}\pp{s,i,a}$. We recall that the monomial $s^{\alpha_1}i^{\alpha_2}a^{\alpha_3}$ has the multi-index $\alpha:=\pr{\alpha_1,\alpha_2,\alpha_3}$ with length $\norm{\alpha}=\alpha_1+\alpha_2+\alpha_3$.
\item The cost function $l_1$ has been assumed to be bounded from below and, without loss of generality, one can assume $\inf_{(s,i)\in\mathcal{Y}}l_1(s,i)> 0$ . As such, both the value functions and the polynomials candidate to approximate this value function should be non-negative. 
\item The reader is reminded that the boundary of the set of constraints $\mathcal{Y}$ is characterized by a region $\pr{s,i^*}$ with $s\leq \frac{\gamma}{\beta(1-\overline a)}$ and a region $-s+\frac{\gamma}{\beta\pr{1-\overline a}}\log s= i-i^*+\frac{\gamma}{\beta\pr{1-\overline a}}\log\frac{\gamma}{\beta\pr{1-\overline a}}-\frac{\gamma}{\beta\pr{1-\overline a}}$, with $s\geq \frac{\gamma}{\beta(1-\overline a)}$. For the second region,  by adding the interior, one gets $-s+\frac{\gamma}{\beta\pr{1-\overline a}}\log s\geq i-i^*+\frac{\gamma}{\beta\pr{1-\overline a}}\log\frac{\gamma}{\beta\pr{1-\overline a}}-\frac{\gamma}{\beta\pr{1-\overline a}}$. 
It is perfectly possible to approximate the $\log$ with polynomials\footnote{Please note that $i^*<1$ implies $\frac{\gamma}{\beta (1-\overline a)}\leq \psi\leq \underset{i\rightarrow 0+}{\lim}\psi(i)<1+\frac{\gamma}{\beta (1-\overline a)}$. As a consequence, $\psi(i)=1+l,$ with $l\in (-1,1)$.}, in order to have semi-algebraic spaces. However,  to simplify arguments, we focus on 
\[\big\{\pr{s,i}: 0\leq s\leq \frac{\gamma}{\beta(1-\overline a)},\ 0\leq i\leq i^*\big\}.\] The control component is already in an interval $\pp{0,\overline a}
$. The time component needs no approximation (functions to integrate are always exponential in $t$ and it can be easily shown that the $\mu_1$ measures have $\mathcal{L}eb(dt)$ marginals).
\item Following \cite[Theorem 1.3]{Putinar1993}, we consider the (truncated) quadratic modules\[Q_r:=\set{\begin{split}&p_0^2(s,i,a)+p_s^2(s,i,a)s\pr{\frac{\gamma}{\beta (1-\overline a)}-s}+p_i^2(s,i,a)(i^*-i)i+p_a^2(s,i,a)(\overline a-a)a:\\ &p_0\in \mathbb{R}_{r}[s,i,a],\ p_s,p_i,p_a\in\mathbb{R}_{r-1}[s,i,a]\end{split}}.\]Note that we imposed that the overall index in $Q_r$ does not exceed $2r$.
Approximating $l_1$ comes to picking some $l_1^r\in Q_r$ that is assumed to be fixed throughout the developments. Hereafter, we will just write $l_1$ but we actually use the modifications $l_1^r$. 
\item By abuse of notation, we will still write $Q_r$ (instead of $\tilde Q_r$) for polynomials $p(s,i)$ (that do not depend on $a$). The reason is that one easily obtains $\tilde Q_r=\set{p(s,i,0):\ p\in Q_r}$ thus getting a simple identification.
\item For the \emph{backward step}: given an estimate $\hat \mu^{r,N-1}$ obtained from the previous step, solve the problem 
\begin{equation}\label{BackStep}
\begin{split}
&\overset{\leftarrow}{\mathbb{B}}_1^{r,N}:=\\&\left.\begin{split}\max\Big\{&\langle qp(s,i),\hat\mu_2^{r,N-1}\rangle\ :\ 
%\textnormal{ s.t.\  }
p\in Q_r,\\
&\ \partial_sp(s,i)\beta(1-a)si-\partial_ip(s,i)\pr{\beta(1-a)s-\gamma}i+qp(s,i)-l_1(s,i)\in Q_r\Big\}.
\end{split}\right.\end{split}
\end{equation} 
\item Add the optimal $\hat{p}^{r,N}$ to the set of vertices $\Pi^{r,N-1}$ i.e. \[\Pi^{r,N}:=\Pi^{r,N-1}\cup\{\hat{p}^{r,N}\}.\]We emphasize that $\hat{p}^{r,N}$ consists of  a finite number of coefficients describing elements of $Q_r\subset \mathbb{R}_{2r}\pp{s,i,a}$. Since $\Pi^{r,N}\subset \mathbf{\Phi}$ {(the identification of elements $\Pi^{r,N}$ with either vectors of coefficients or polynomials being made with a slight abuse of notation, but cannonically)}, it follows that \begin{align}\label{SuboptimBN}
\hat{b}^{r,N}\pr{\mu_2}:=\underset{p\in\Pi^{r,N}}{\sup}\scal{qp(s,i),\mu_2}\leq \overset{\leftarrow}{\mathbb{B}}_1\pr{{\mu}_2}. 
\end{align}
\item Since $p\in Q_r$, computing $\scal{qp(s,i),\mu_2}$ amounts to compute the moments $m_\alpha$ for a multi-index $\alpha$ whose length does not exceed $2r$.  As such, "remembering" a measure $\mu$ amounts to "remember" the moments \begin{align*}
m_1\pr{\alpha_1,\alpha_2,\alpha_3}&:=q\int_{\mathbb{R}_+\times\mathcal{Y}\times \pp{0,\overline a}}e^{-qt}s^{\alpha_1}i^{\alpha_2}a^{\alpha_3}\mu_1(dt, ds,dy,da),\\ m_2\pr{\alpha_1,\alpha_2}&:=\int_{\mathcal{Y}}s^{\alpha_1}i^{\alpha_2}\mu_2(ds,da),\end{align*} for $0\leq \alpha_1+\alpha_2\leq 2r$ and $\alpha_3\in\set{0,1}$.  In general, if $l_1$ depends on the control $a$,  one adapts to $\norm{\alpha}\leq 2r$ for $\mu_1$ moments.
\item We wish to emphasize that, in our problems, $t$ only appears in a $q$-exponential form. By further assuming that the optimal answer $a$ is a positional strategy $a(s,i)$ (but independent of the time), one can replace $\mu_1$ with the marginal $\overline\mu_1(ds,di,da):=\int_{\mathbb{R}_+}qe^{-qt}\mu_1(dt,ds,di,da)$ in the forward problem.
\item For the \emph{forward step}: given $\hat{b}^{r,N}$ as in $\eqref{SuboptimBN}$,  find the arguments $(\hat{\overline\mu}_1^{r,N},\hat{\mu}_2^{r,N})$ realizing the minimum (with respect to some generated scenarios that will be explained below)
\begin{equation}\label{ForStep}\begin{split}
\min \bigg\{&T\scal{l_1(s,i),\overline\mu_1}+e^{-qT}\hat{b}^{r,N}\pr{\mu_2}\ :\ 
%\\ \textnormal{s.t. } 
\supp\pr{\overline\mu_1,\mu_2}\subset\mathcal{Y}\times \pp{0,\overline a}\times\mathcal{Y}\textit{ s.t. }\forall\,
%p(s,i)=s^{\alpha_1}i^{\alpha_2},\ 
\alpha_1+\alpha_2\leq 2r\\ 
&e^{-qT}m_2\pr{\alpha_1,\alpha_2}-s_0^{\alpha_1}i_0^{\alpha_2}=
\\&\qquad=\frac{T}{q}\pr{\begin{split}&-qm_1\pr{\alpha_1,\alpha_2,0}-\alpha_1\beta m_1\pr{\alpha_1,\alpha_2+1,0}+\alpha_1\beta m_1\pr{\alpha_1,\alpha_2+1,1}\\
&+\alpha_2\beta m_1\pr{\alpha_1+1,\alpha_2,0}-\alpha_2\beta m_1\pr{\alpha_1+1,\alpha_2,1}-\alpha_2\gamma m_1\pr{\alpha_1,\alpha_2,0}\end{split}}\bigg\}.
\end{split}
\end{equation}Please note that the above equality on moments is nothing else than the equality constraint on measures in $\Theta_{0,T}$ written for the function $f(t,s,i):=e^{-qt}s^{\alpha_1}i^{\alpha_2}$.\\
The value obtained $\underline{\Gamma}^{r,N}$ cannot exceed $\overline{\Gamma}^{r,N}:=T\langle l_1(s,i),\overline\mu_1^{r,N}\rangle+e^{-qT}\overset{\leftarrow}{\mathbb{B}}_1^{r,N+1}$ (the second-stage being computed as in \eqref{BackStep} with respect to $\hat \mu_2^{r,N}$). The difference between these quantities is used as a stopping criterion.
\item The measures $\mu_2$ can be generated by an a-priori discretization of the compact state space.  To limit the generation of $\mu_2$, one may want to note that the (convex) reachable set starting from $\pr{s_0,i_0}\in\mathcal{Y}$ with viable controls is described by the (trace over $\mathcal{Y}$ of) convex combinations of extremal trajectories (controlled with $0$ for the "upper" branch,  respectively $\overline a$ for the "lower one") i.e.
\begin{equation*}\begin{split}&\mathcal{R}(s_0,i_0):=\\&\set{\begin{split}&\alpha\pr{s_1,i_1}+(1-\alpha)\pr{s_2,i_2} \ :\ 0\leq\alpha\leq 1,\  0\leq i_1,i_2\leq i^*,\ s_1\leq \psi(i_1),\ s_2\leq \psi(i_2),\\
&s_1+i_1=s_0+i_0+\frac{\gamma}{\beta}\log\frac{s_1}{s_0}, \ s_2+i_2=s_0+i_0+\frac{\gamma}{\beta(1-\overline a)}\log\frac{s_2}{s_0}\end{split}}.\end{split}\end{equation*}For the two equalities in $\mathcal{R}$, the reader is invited to recall the computations in \eqref{Comp0} (with minimal control $0$ and maximal control $\overline a$).
Further precision can be added by computing, for $T>0$, $\underline{s_1}^T:=s^{s_0,i_0,0}(T), \ \underline{s_2}^T:=s^{s_0,i_0,\overline a}(T)$ and defining $\mathcal{R}^T(s_0,i_0)$ by further imposing $\underline{s_i}^T\leq s_i,\ i\in\set{1,2}$. This will provide the positions reachable prior to time $T>0$. 
\item Reduction of reachable sets starting from the support of $\mu_2$ is possible  when one uses several stages. Randomization of $T$ such that one fully takes advantage of $\mathcal{R}^T$ is also possible (see \cite{GS2011}).
\item Finally, for each scenario of $\mu_2$ generated, the moments $m_2$ are computed and they provide the family $m_1$ (satisfying the relations in \eqref{ForStep}). Compatibility with the measure structure is then checked for the functionals $m_2$ (cf. \cite[Eq.\ (16)]{Putinar1993} and the comments following this condition).
\end{enumerate}
Finally, let us mention that target problems can be obtained using the same procedure as in \cite[Section 5]{GS2011}.
\section{Appendix}
\subsection{A1}\label{App1}
As mentioned before, the regularity of the value function $V$ (with an impact on the definition of a solution to \eqref{HJ}) makes the object of several papers, usually under some inward pointing qualification assumption (cf.\ \cite{Soner86_1}) or some outward qualification ones (cf.\ \cite{FrankowskaPlaskacz2000}). None of these results apply directly to our problem: either because the analysis partially covers the space (as it is the case when using \cite{Soner86_1}), or because the condition per se is not satisfied everywhere on the boundary (see the analysis in connection with \cite{FrankowskaPlaskacz2000}) but it can be generalized. 

To understand the importance of these considerations, the reader is reminded the following. We have hinted that the value function is a "solution" to \eqref{HJ}. Under reasonable controllability (or qualification) conditions, the solution satisfies (in a viscosity sense), the solution equality in \eqref{HJ} on the interior of the set of constraints $\mathcal{Y}$ and a super-solution ($\leq$ inequality) on the boundary; see \cite[Definition 2.1]{Soner86_1}. Roughly said, the existing results are for either inward pointing conditions (cf.\ \cite{Soner86_1}) or outward pointing conditions (cf.\ \cite{FrankowskaPlaskacz2000}). Our system is of "hybrid" nature boundary. It turns out that, up till $\mathcal{G}$, the boundary $\partial\mathcal{Y}$ will comply with \cite{FrankowskaPlaskacz2000} and on $\mathcal{G}$ (and actually on the larger boundary $\partial\mathcal{Y}\cap\partial\mathcal{B}$), the inward pointing condition is satisfied. It appears in no way surprising that theoretical results can be obtained, but we choose to only hint the connections.
\subsubsection{Connection with \cite{FrankowskaPlaskacz2000}}\label{A1_1}Let us point out some connections with \cite{FrankowskaPlaskacz2000} from which we borrow the notations ($D,M$, etc.) in order to facilitate referral.  
\begin{proposition}
The locally compact set $D:=\mathcal{Y}\setminus\big\{(s_0,i^*):\ \frac{\gamma}{\beta}\leq s_0\leq\frac{\gamma}{\beta(1-\overline a)}\big\}$ is locally-in-time backward invariant for $\pp{0,\overline a}$-valued policies, i.e., for every $\pr{s_0,i_0}\in D$, and every $a\in\mathbb{L}^0\pr{\mathbb{R};\pp{0,\overline a}}$, there exists $t_0>0$ such that $\pr{s^{s_0,i_0,a}(-t),i^{s_0,i_0,a}(-t)}\in D$ for every $t\in\pp{0,t_0}$.  Moreover, $\overline D=\mathcal{Y}$.
\end{proposition}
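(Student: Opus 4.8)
The plan is to split $D$ into its interior and its boundary, the boundary carrying all the content. Since $D=\mathcal{Y}\setminus S$ with $S:=\pp{\frac{\gamma}{\beta},\frac{\gamma}{\beta(1-\overline a)}}\times\set{i^*}$ a relatively closed segment with empty planar interior, every interior point of $\mathcal{Y}$ already lies in the interior of $D$, so continuity of the flow makes the backward trajectory stay in $D$ for small times with nothing to prove. The closure claim $\overline D=\mathcal{Y}$ is equally quick: each $\pr{s,i^*}\in S$ is the limit of $\pr{s,i^*-\varepsilon}$, and for small $\varepsilon>0$ these lie in $\mathcal{Y}\setminus S$ because $\psi(i^*-\varepsilon)>\psi(i^*)=\frac{\gamma}{\beta(1-\overline a)}\geq s$; together with $\overline D\subseteq\overline{\mathcal{Y}}=\mathcal{Y}$ this gives equality. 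So the real task is the boundary $\partial\mathcal{Y}$, which I would organise into three pieces: the viability curve $\set{\pr{\psi(i),i}:\ 0<i<i^*}$, the excised top segment $S$, and the remaining (green) top edge.

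The core of the argument is the regular part $\set{\pr{\psi(i),i}:\ 0<i<i^*}$. Writing the backward velocity produced by a control value $a$ as $v_a=\pr{\beta(1-a)si,\ \pr{\gamma-\beta(1-a)s}i}$ and recalling that $\mathcal{Y}=\set{s\leq\psi(i)}$ locally, the backward trajectory stays inside precisely when its $s$-component does not outgrow the boundary, i.e. when $v_a^{s}-\psi'(i)\,v_a^{i}\leq 0$. Differentiating the defining relation of $\psi$ yields $\psi'(i)=\frac{\beta(1-\overline a)\psi}{\gamma-\beta(1-\overline a)\psi}$, and substituting $s_0=\psi(i_0)$ one finds, after cancellation,
\[
v_a^{s}-\psi'(i_0)\,v_a^{i}=\frac{\gamma\beta\, s_0 i_0}{\gamma-\beta(1-\overline a)s_0}\,\pr{\overline a-a}.
\]
On the open curve $s_0=\psi(i_0)>\psi(i^*)=\frac{\gamma}{\beta(1-\overline a)}$, so the denominator is strictly negative while $\overline a-a\geq 0$; hence the expression is $\leq 0$ for every $a\in\pp{0,\overline a}$, with equality exactly at $a=\overline a$. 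For $a<\overline a$ the backward field points strictly into $\mathcal{Y}$; for $a=\overline a$ the backward trajectory coincides with the $\overline a$-trajectory, which is itself the curve $\psi$ (their slopes $\frac{ds}{di}$ agree), so it slides along $\partial\mathcal{Y}^+$ towards smaller $i$ and stays in $D$. A routine comparison/Gronwall estimate then upgrades this first-order information to genuine local-in-time backward invariance along this portion of the boundary.

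It remains to treat the top edge $i=i^*$, and this is where I expect the main obstacle. On $S$ one may choose the feedback $a=1-\frac{\gamma}{\beta s}\in(0,\overline a]$, which annihilates the backward vertical velocity $\pr{\gamma-\beta(1-a)s}i^*$; the backward motion then proceeds with $s$ strictly increasing along $\set{i=i^*}$, travelling rightwards and remaining inside $S$ (or leaving $\mathcal{Y}$ at the endpoint $\frac{\gamma}{\beta(1-\overline a)}$). This is exactly the obstruction to backward invariance, which is why $S$ is excised from $\mathcal{Y}$ in the first place, and the computation above makes the choice of the two endpoints $\frac{\gamma}{\beta}$ and $\frac{\gamma}{\beta(1-\overline a)}$ transparent.

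The genuinely delicate point is the interface with $\mathcal{G}$ along the remaining green edge $\set{\pr{s,i^*}:\ 0\leq s<\frac{\gamma}{\beta}}$. There $\beta s<\gamma$ forces the backward vertical velocity $\pr{\gamma-\beta(1-a)s}i^*>0$ for \emph{every} admissible $a$, so the first-order reasoning that succeeded on the $\psi$-curve does not transfer verbatim; this is the step I expect to require the most care. The way I would reconcile it is through the hybrid nature of $\partial\mathcal{Y}$ stressed before the proposition: on $\mathcal{G}$ it is the \emph{inward} qualification of \cite{Soner86_1} that is in force, not the outward/backward qualification of \cite{FrankowskaPlaskacz2000}, so this portion is meant to be absorbed into the complementary (Soner) regime rather than treated by pure backward invariance. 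Concretely, I would either phrase the backward-invariance conclusion relative to the non-green part of $\partial\mathcal{Y}$ (the $\psi$-curve and its interface, whose backward flow was shown above to point inward) or invoke the inward analysis on $\partial\mathcal{Y}\cap\partial\mathcal{B}$ where the two regimes meet, thereby closing the characterisation of $V$ across the whole of $\mathcal{Y}$.
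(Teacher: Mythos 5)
Your decomposition (interior, closure, three boundary pieces) is sound, and on the curve $\set{\pr{\psi(i),i}:\ 0<i<i^*}$ your argument is correct but takes a genuinely different route from the paper. You differentiate the defining relation of $\psi$ and check the sign of the transversality quantity
\begin{equation*}
v_a^{s}-\psi'(i_0)\,v_a^{i}\;=\;\frac{\gamma\beta\,s_0i_0}{\gamma-\beta(1-\overline a)s_0}\pr{\overline a-a}\;\leq\;0
\end{equation*}
(the identity is correct), then upgrade to invariance by a Gronwall estimate --- the same mechanism the paper uses in Proposition \ref{PropB}. The paper's own proof never differentiates $\psi$ and needs no Gronwall: on this curve $s_0\geq\frac{\gamma}{\beta(1-\overline a)}$, backward in time $s$ is non-decreasing, so this lower bound persists; hence $\frac{d}{dt}\pp{i^{s_0,i_0,a}(-t)}=-\pr{\beta(1-a(-t))s(-t)-\gamma}i(-t)\leq 0$, giving $i(-t)\leq i_0<i^*$ for every measurable control; membership of the backward points in $\mathcal{Y}$ (hence in $D$, since $i(-t)<i^*$) then follows by concatenating the time-shifted control on $\pp{0,t}$ with an admissible control issued from $\pr{s_0,i_0}$, i.e.\ by invoking the viability \emph{definition} of $\mathcal{Y}$ rather than its analytic description. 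The paper's argument is shorter and purely monotonicity-based; yours has the separate merit of exhibiting $a=\overline a$ as the unique tangential control, a fact the monotonicity argument conceals.

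Your last paragraph does not prove the statement on the green edge, but the obstruction you identify there is real and is not resolved by the paper's proof either. Points of $\big[0,\frac{\gamma}{\beta}\big)\times\set{i^*}$ do belong to $D$ (only $\pp{\frac{\gamma}{\beta},\frac{\gamma}{\beta(1-\overline a)}}\times\set{i^*}$ is removed), yet for every $a\in\pp{0,\overline a}$ the backward vertical velocity $\pr{\gamma-\beta(1-a)s}i^*$ is strictly positive there, so the backward trajectory exits $\set{i\leq i^*}\supseteq\mathcal{Y}\supseteq D$ instantly: the proposition, as literally stated, fails on that segment. The paper's proof dismisses every point off the active boundary $\partial\mathcal{Y}^+$ as ``obvious by continuity and the explicit form of $\mathcal{Y}$'', which is precisely the step that breaks down on the green part of the top edge; the defect is acknowledged only in the subsequent discussion (the litigious behaviour near $\pr{\frac{\gamma}{\beta},i^*}$) and is repaired by replacing $\mathcal{G}$ with $\tilde{\mathcal{G}}$, so that the top edge of the modified set $\tilde{\mathcal{Y}}$ reduces to $M$ and $\tilde{D}=\tilde{\mathcal{Y}}\setminus M$ becomes genuinely backward invariant. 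Your proposed remedy --- re-scoping the claim or handing that edge to the inward-pointing regime of \cite{Soner86_1} --- is therefore the same repair the paper eventually makes; just be aware that it is an amendment of the statement, not a completion of its proof, and that the paper's own write-up of this proposition needs the same amendment.
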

\begin{proof}
We only need to show something for initial conditions on the active boundary of $\mathcal{Y}$, defined in \eqref{dY+}, that is if $\pr{s_0,i_0}\in\partial\mathcal{Y}^+\cap D$; otherwise, using the continuous dependency of the initial data, and the explicit form of $\mathcal{Y}$, the conclusion is obvious. In this setting, $s_0\geq\frac{\gamma}{\beta(1-\overline a)}$. Then, this inequality is preserved for $t\mapsto s^{s_0,i_0,a}(-t)$. As a consequence, \[\begin{split}&d\pp{i^{s_0,i_0,a}(-t)}=-\pr{\beta(1-a)s^{s_0,i_0,a}(-t)-\gamma}i^{s_0,i_0,a}(-t)\leq 0\end{split}\] which implies $i^{s_0,i_0,a}(-t)\leq i_0< i^*$ and, thus, it follows that $\pr{s^{s_0,i_0,a}(-t),i^{s_0,i_0,a}(-t)}\in \mathcal{Y}$. 
Indeed, setting $(s_1,i_1)=(s^{s_0,i_0,a}(-t),i^{s_0,i_0,a}(-t))$, by taking the control $\tilde{a}(r):=a(-t+r),\ r\geq 0$ , $(s^{s_1,i_1,\tilde a}(r),i^{s_1,i_1,\tilde a}(r))=\pr{s^{s_0,i_0,a}(r-t),i^{s_0,i_0,\tilde a}(r-t)}$ satisfies $i\leq i^*$ for every $r\in[0,t]$,. At time $t$, the trajectory reaches $(s_0,i_0)$ after which a control can be found to stay in $\mathcal{Y}$. Thus, for $(s_1,i_1)$, we have found a control keeping $i\leq i^*$ (which is the definition of $\mathcal{Y}$). On the other hand, $i^{s_0,i_0,a}(-t)<i^*$, thus,  $\pr{s^{s_0,i_0,a}(-t),i^{s_0,i_0,a}(-t)}\in D$.
\end{proof}

Let us set $M:=\mathcal{Y}\setminus D=\big\{\pr{s_0,i^*}:\ \frac{\gamma}{\beta}\leq s_0\leq\frac{\gamma}{\beta(1-\overline a)}\big\}$.  If $\frac{\gamma}{\beta}< s_0<\frac{\gamma}{\beta(1-\overline a)}$, then $\partial\mathcal{Y}^+$ is regular and the outward normals are of type $(0,1)$. Then $\scal{\pr{-\beta s_0i^*,\pr{\beta s_0-\gamma}i^*},\pr{0,1}}>0$ and it follows that the $0$-controlled trajectory does not belong to the paratingent set $P_{\mathcal{Y}}^M(s_0,i^*)$ (for the definition and the relevance of these computations, please refer to \cite[Page 822 and the Remark (2) on page 823]{FrankowskaPlaskacz2000}). This is easily extended at $\pr{\frac{\gamma}{\beta(1-\overline a)},i^*}$.  However,  the same argument no longer works at $\pr{\frac{\gamma}{\beta},i^*}$.  It follows that one cannot directly apply \cite[Theorem 11]{FrankowskaPlaskacz2000} to infer the connection with $\eqref{HJ}$. The attentive reader will have noted that the litigious point $\pr{\frac{\gamma}{\beta},i^*}$ belongs to $\mathcal{G}$ which is the invariance domain with respect to our control system \eqref{SIR} and that the remaining of $\partial\mathcal{Y}$ (i.e. $\pp{0,\frac{\gamma}{\beta}}\times\set{i^*}$ satisfies the inward pointing qualification condition with every control but $0$).

\begin{remark} \begin{enumerate}
\item The generalization of \cite[Theorem 11]{FrankowskaPlaskacz2000} to our present setting can be obtained, under Assumption \ref{Ass} and the hypotheses \eqref{Assiv}  and \eqref{GenCond}, as follows. First, note that our function $W$ in \eqref{CompW} is differentiable almost everywhere and continuous. At the points of differentiability, the subdifferential set is a singleton and, using the computation of the derivatives of $W$ done in the proof of Theorem \ref{ThMain}, it is easily seen that the conditions \cite[Eq. (26), (27)]{FrankowskaPlaskacz2000} are satisfied. This yields \cite[Step 1 in Theorem 11]{FrankowskaPlaskacz2000}, first at every point where $W$ is differentiable, then, using the continuity of $W$, also on the curve where $W$ may not be differentiable. The inequality $W\leq V$ can be proven following the arguments in \cite[Proposition 9]{FrankowskaPlaskacz2000} due to the previous considerations.
\item \cite[Proposition 9]{FrankowskaPlaskacz2000} uses a boundary condition at the final time. In our case, this condition is replaced with $V=0$, on $\mathcal{G}$. Furthermore, in order to guarantee that accumulations do not happen outside $\mathcal{G}$, one requires \begin{equation}
\label{ass+1}
l_1(s,0,0)>0,\ \forall (s,0)\notin\mathcal{G}.
\end{equation}
The need for boundary conditions in order to guarantee comparison results is also illustrated in \cite{BFZ2011}.
\end{enumerate}
\end{remark}
The assumptions in \cite[Proposition 9]{FrankowskaPlaskacz2000} can be directly enforced by slightly modifying the yellow (admissible) set in such a way that $\mathcal{G}$ is the only one affected. The reader will note that the value function is kept the same if one further assumes \begin{equation}\label{ass+}
l_1(s,i,a)=0, \textnormal{ for all }a\in\pp{0,\overline a}\textnormal{ whenever }\pr{s,i}\in\mathcal{G}.
\end{equation} We consider \[\tilde {\mathcal{G}}=\big\{\pr{s,i}\in\mathcal{G}:\ s\geq \tilde{\psi}(i)\big\}, \textnormal{ and }\tilde{Y}=\pr{Y\setminus \mathcal{G}}\cup\tilde{\mathcal{G}},\]where $\big\{\tilde{\psi}(i),i\big)$ describes the curve $\set{\pr{s^{\frac{\gamma}{\beta},i^*,\overline a}(t),i^{\frac{\gamma}{\beta},i^*,\overline a}(t)}:\ t\geq 0}$ (please see Figure \ref{Fig3}). It is easy to see that $\tilde{\psi}(i)=x$ is the solution of $-x+\frac{\gamma}{\beta\pr{1-\overline a}}\log x=i-i^*+\frac{\gamma}{\beta\pr{1-\overline a}}\log\frac{\gamma}{\beta}-\frac{\gamma}{\beta}$, but, this time asking that $x\leq \frac{\gamma}{\beta}$. By computing derivatives in the above definition, one gets \begin{align*}
&\tilde{\psi}'(i)\pp{-1+\frac{\gamma}{\beta(1-\overline a)\tilde{\psi}(i)}}=1\ \iff\ 
\tilde{\psi}'(i)=\frac{1}{-1+\frac{\gamma}{\beta(1-\overline a)\tilde{\psi}(i)}}.
\end{align*}
Please note that this derivative is positive. It is easy to see that 
\begin{enumerate}
\item The $0$-controlled trajectory starting from $\pr{\frac{\gamma}{\beta},i^*}$ leaves $\partial\mathcal{Y}$ (thus completing the paratingent assumption at $\pr{\frac{\gamma}{\beta},i^*}\in M$). The reader is referred to the red line in Figure \ref{Fig3}.
\item The set $\tilde{D}:=\tilde{\mathcal{Y}}\setminus M$ is locally-in-time backward invariant.  To see this, we only need to show that every (backward) trajectory starting at $\pr{\tilde{\psi}(i),i}$ with $i<i^*$ satisfies $s^{\tilde{\psi}(i),i,a}(-t)-\tilde{\psi}\pr{i^{\tilde{\psi}(i),i,a}(-t)}\geq 0$. To this purpose, we drop the superscript and compute\begin{align*}
&d\pp{s(-t)-\tilde{\psi}\pr{i(-t)}}\\
=&\beta(1-a(-t))s(-t)i(-t)+\frac{1}{-1+\frac{\gamma}{\beta(1-\overline a)\tilde{\psi}(i(-t))}}\pr{\beta(1-a(-t))s(-t)-\gamma}i(-t)\\
=&\frac{\gamma i(-t)}{-1+\frac{\gamma}{\beta(1-\overline a)\tilde{\psi}(i(-t))}}\pr{\frac{1-a(-t)}{1-\overline a}\frac{s(-t)}{\tilde{\psi}\pr{i(-t)}}-1}\geq \frac{\gamma i(-t)}{-1+\frac{\gamma}{\beta(1-\overline a)\tilde{\psi}(i(-t))}}\frac{s(-t)-\tilde{\psi}\pr{i(-t)}}{\tilde{\psi}\pr{i(-t)}}.
\end{align*}
It follows that $s(-t)-\tilde{\psi}\pr{i(-t)}$ is non-decreasing, hence it stays non-negative.
\end{enumerate}

\begin{figure}[!t]
\centerline{\includegraphics[width=\columnwidth]{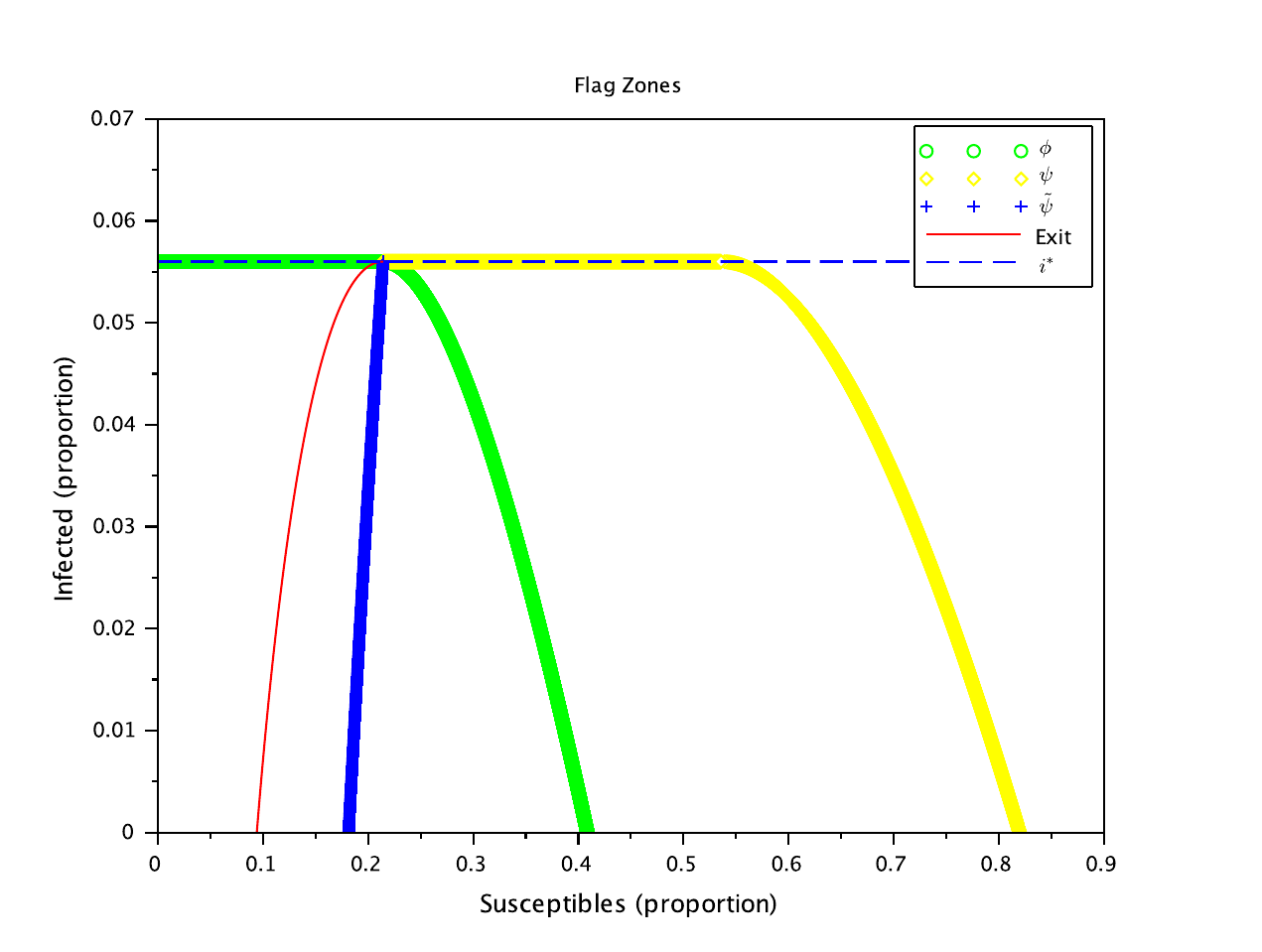}}
\caption{Modified $\tilde{\mathcal{Y}}$ and the outer pointing trajectory at $\frac{\gamma}{\beta}$.}
\label{Fig3}
\end{figure}
{
We do not actually deal with the equation \eqref{HJ} as written, but merely check for this equation in order to avoid the presence of the discount factor $q$.  We focus on the analysis on $\tilde{\mathcal{Y}}$ with the aforementioned assumptions and properties. To be more rigorous, one should define \begin{equation}
\begin{cases}
\tilde J_q\pr{s_0,i_0,a}:=\int_0^\infty e^{-qt}l_1\pr{s^{s_0,i_0,a}(t),i^{s_0,i_0,a}(t),a(t)}dt;\\
V_q\pr{s_0,i_0}:=\underset{a\in\mathcal{A}d}{\inf}\tilde{J}_q\pr{s_0,i_0,a}.
\end{cases}
\end{equation}
The associated equation is \begin{equation}\label{HJq}
qV_q(s,i)+\sup_{a\in \pp{0,\overline{a}}}\set{\beta si(1-a)\partial_s V_q\pr{s,i}-\pr{\beta  (1-a)s-\gamma}i\partial_i V_q(s,i)-l_1(s,i,a)}=0,\textnormal{ on } \tilde{\mathcal{G}}.
\end{equation}
To simplify notations, we set, for $\pr{s,i,p_s,p_i}\in\mathbb{R}^4$,
\begin{equation}
\label{Hamiltonianq}
\begin{cases}
H(s,i,p_s,p_i):=&\underset{a\in \pp{0,\overline{a}}}{\sup}\set{-\beta si(1-a)p_s+\pr{\beta  (1-a)s-\gamma}ip_i-l_1(s,i,a)};\\
H_{out}\pr{s,i,p_s,p_i}:=&\sup\big\lbrace-\beta si(1-a)p_s+\pr{\beta  (1-a)s-\gamma}ip_i-l_1(s,i,a)\ : \\
&\quad \quad a\in \pp{0,\overline{a}},\ 
\pr{-\beta si(1-a),(\beta s(1-a)-\gamma)i}\notin P_{\tilde{\mathcal{G}}}^M\big\rbrace, \ \forall \pr{s,i}\in M.
\end{cases}
\end{equation}
The reader is invited to note that the paratingent condition defining $H_{out}$ and the continuity of the vector field defining the dynamics lead to 
\[H_{out}\pr{s,i,p_s,p_i}:=\sup_{a\in \pp{0,1-\frac{\gamma}{\beta s}}}\set{-\beta si(1-a)p_s+\pr{\beta  (1-a)s-\gamma}ip_i-l_1(s,i,a)},\]for all $\frac{\gamma}{\beta}<s_0<\frac{\gamma}{\beta(1-\overline{a})},\ i=i^*$.
At the corners,  all but one control $\overline{a}$ take the trajectory out of $\tilde{\mathcal{Y}}$ such that $H_{out}=H_{in}$.
With these in mind, the precise understanding of the solution (see \cite[Theorem 6, c)]{FP_1998}) is the following.
\begin{definition}
$W_q$ is a (viscosity) solution to \eqref{HJq} if \begin{enumerate}
\item $W_q$ is a lower semi-continuous bounded function;
\item for every $(s,i)\in \tilde{D}$, and for all $n\in\partial_-W_q(s,i)$, $qW_q(s,i)+H(s,i,-n)=0$;
\item for every $(s,i)=(s,i^*)\in M$, and every $n\in\partial_-W_q(s,i)$, one has $qW_q(s,i)+H(s,i,-n)\geq 0$ and $qW_q(s,i)+H_{out}(s,i,-n)\leq 0$.
\end{enumerate}
\end{definition}
\begin{remark}
\begin{enumerate}
\item We do not insist here neither on the definitions of subdifferentials nor on their link with negative polar cones to the epigraph or to proximal or Mordukovich cones. Instead, the interested reader can take a further look at \cite[Section 2]{FP_1998}.
\item The condition on $H_{out}$ is only needed (see \cite[Lemma 15]{FP_1998}) to guarantee continuity at points in $M$.  Our candidates are shown to satisfy such continuity properties such that $H_{out}$ can be left aside (please see \cite[Proposition 16]{FP_1998} providing the comparison result without the supersolution-like condition on $H_{out}$).
\item Subdiferentials at $\pr{s,i}$ are either approximated by (convex combinations) of gradients at points of differentiability or computed on the lines $\set{(\phi(i),i)\ :\ i\in\pp{0,i^*} }$, 
 respectively on $\set{(\varphi^{\frac{\gamma}{\beta (1-\overline a)}}(i),i)\ :\ i\in\pp{0,i^*} }$ or, eventually,  on $M$. 
\item Along the aforementioned line $(\phi(i),i)$, the function $\tilde{J}_q\pr{s_0,i_0,a^*}=0$ is constant such that the contribution to the subdifferentials is $n=(0,0)$ and there is nothing to prove.  On the contrary, the definition of $s_1$ shows that there is no particular problem of differentiability on $\set{(\varphi^{\frac{\gamma}{\beta (1-\overline a)}}(i),i)\ :\ i\in\pp{0,i^*} }$. The same holds true on $M$, where $\tilde{J}_q\pr{s_0,i^*,a^*}=\frac{1}{\gamma i^*}e^{-\frac{qs_0}{\gamma i^*}}\int_{\frac{\gamma}{\beta}}^{s_0}e^{\frac{qu}{\gamma i^*}}l_1\pr{u,i^*,1-\frac{\gamma}{\beta u}}du$.
\end{enumerate}
\end{remark}
Following similar arguments as for Lemma \ref{CompW},  it follows that $\tilde{J}_q\pr{s_0,i_0,a^*}$ is a (continuous) solution to \eqref{HJq}. Since all the assumptions of \cite[Theorem 6]{FP_1998} are satisfied on $\tilde{\mathcal{G}}$, it follows that $V_q(\cdot)=\tilde J_q\pr{\cdot,a^*}$, hence $a^*$ is optimal for $V_q$.\\
As a by-product, $a^*$ is optimal for $V$ (with $q=0$). Indeed, since $l_1$ is non-negative, it is obvious that $V_q\leq V$.  Using Lebesgue's dominated convergence, it is obvious that $\tilde J(s_0,i_0,a^*)=\lim_{q\rightarrow 0+}\tilde J_q(s_0,i_0,a^*)$. Then $V(s_0,i_0)\geq \tilde J(s_0,i_0,a^*)$ and the optimality follows.\\
When we say that "heuristically" $V$ satisfies the equation, it is because, under the standing assumptions on $l_1$, it is obtained as the limit of continuous solutions to \eqref{HJq}.}
\subsubsection{Connection with \cite{Soner86_1}}\label{A1_2}
Let us assume that, in addition to the Assumption \ref{Ass}, the condition \eqref{Assiv} holds true.  
\begin{proposition}\label{PropB} Whenever $\pr{s_0,i_0}\in \mathcal{B}^{\overline s}$, with $\overline s\in\big[\frac{\gamma}{\beta},\frac{\gamma}{\beta\pr{1-\overline a}}\big]$ and $a\in\mathcal{A}d$ such that $\pr{s^{s_0,i_0,a},i^{s_0,i_0,a}}\in\mathcal{Y}$ on $\mathbb{R}_+$, it holds true that $\pr{s^{s_0,i_0,a},i^{s_0,i_0,a}}\in\mathcal{B}^{\overline s}$ on $\mathbb{R}_+$.
\end{proposition}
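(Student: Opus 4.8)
The plan is to exhibit a scalar quantity that is monotone along every admissible trajectory and whose relevant level set cuts out precisely the boundary curve $\set{\pr{\varphi^{\overline s}(i),i}}$ of $\mathcal{B}^{\overline s}$. Concretely, I would introduce the auxiliary function
\[
H(s,i):=-s+\frac{\gamma}{\beta}\log s-i,
\]
built with the recovery-free coefficient $\frac{\gamma}{\beta}$ (rather than $\frac{\gamma}{\beta(1-a)}$), and differentiate it along any $\pp{0,\overline a}$-valued controlled flow of \eqref{SIR}. A direct computation collapses to
\[
\frac{d}{dt}H\pr{s^{s_0,i_0,a}(t),i^{s_0,i_0,a}(t)}=\gamma\, a(t)\, i^{s_0,i_0,a}(t)\ge 0,
\]
the nonnegativity being guaranteed by $a(t)\in\pp{0,\overline a}$ and by the time-invariance of $\mathbb{R}_+^2$ noted after \eqref{SIR}. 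Thus $H$ is nondecreasing along trajectories; it is moreover conserved under the $0$-control, which is exactly why the boundaries of the sets $\mathcal{B}^{\overline s}$, being time-reversed $0$-controlled trajectories, coincide with level sets of $H$.

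Next I would record the two structural facts needed. First, since $\dot s=-\beta(1-a)si\le 0$, the component $s$ is nonincreasing (strictly decreasing while $i>0$); once it drops below $\frac{\gamma}{\beta}$ it stays there. Second, I would translate membership in $\mathcal{B}^{\overline s}$ into a condition on $H$. Writing $c:=H\pr{\overline s,i^*}$, using that $\overline s\ge\frac{\gamma}{\beta}$ forces $\varphi^{\overline s}\ge\frac{\gamma}{\beta}$ through the sign convention in \eqref{Bgen}, and that $s\mapsto -s+\frac{\gamma}{\beta}\log s$ is strictly decreasing on $\big[\frac{\gamma}{\beta},\infty\big)$, one checks the exact dichotomy
\[
\mathcal{B}^{\overline s}=\Big(\set{0\le s\le\tfrac{\gamma}{\beta}}\cap\set{0\le i\le i^*}\Big)\cup\set{s\ge\tfrac{\gamma}{\beta},\ 0\le i\le i^*,\ H(s,i)\ge c}.
\]
On the first piece no inequality on $H$ is required, since there $s\le\frac{\gamma}{\beta}\le\varphi^{\overline s}(i)$ automatically; on the second piece $s\le\varphi^{\overline s}(i)$ is equivalent to $H(s,i)\ge c$ precisely because the map above is monotone past $\frac{\gamma}{\beta}$.

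The conclusion then follows by a short case analysis, using that the trajectory is assumed to remain in $\mathcal{Y}$, i.e.\ $i^{s_0,i_0,a}(t)\le i^*$ for all $t$. If $s_0\le\frac{\gamma}{\beta}$, monotonicity of $s$ keeps the whole trajectory in the strip $\set{0\le s\le\frac{\gamma}{\beta},\ 0\le i\le i^*}\subseteq\mathcal{B}^{\overline s}$. If $s_0>\frac{\gamma}{\beta}$, then $\pr{s_0,i_0}\in\mathcal{B}^{\overline s}$ gives $H(s_0,i_0)\ge c$; as long as $s(t)>\frac{\gamma}{\beta}$ the monotonicity of $H$ yields $H\pr{s(t),i(t)}\ge H(s_0,i_0)\ge c$, so the point lies in the second piece, and after the (single, by monotonicity of $s$) time at which $s$ reaches $\frac{\gamma}{\beta}$ the trajectory enters the strip and stays there. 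In every case $\pr{s^{s_0,i_0,a}(t),i^{s_0,i_0,a}(t)}\in\mathcal{B}^{\overline s}$ for all $t\ge 0$. The only genuinely delicate point is the discovery and verification of the monotone quantity $H$ together with its exact characterization of $\mathcal{B}^{\overline s}$; once these are in place the invariance is immediate, and the hypothesis that the trajectory stays in $\mathcal{Y}$ enters solely to enforce $i\le i^*$, which is exactly the constraint defining the $i$-range of $\mathcal{B}^{\overline s}$.
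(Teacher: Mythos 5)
Your proof is correct, and it takes a genuinely different route from the paper's. Your key computation checks out: with $H(s,i)=-s+\frac{\gamma}{\beta}\log s-i$ one indeed gets $\frac{d}{dt}H\pr{s(t),i(t)}=\pr{-1+\frac{\gamma}{\beta s(t)}}\dot s(t)-\dot i(t)=\gamma\, a(t)\, i(t)\geq 0$ along any admissible trajectory, and your level-set dichotomy for $\mathcal{B}^{\overline s}$ is exact, because $g(s):=-s+\frac{\gamma}{\beta}\log s$ is strictly decreasing on $[\frac{\gamma}{\beta},+\infty)$, the sign convention in \eqref{Bgen} places $\varphi^{\overline s}(i)$ in $[\frac{\gamma}{\beta},+\infty)$ whenever $\overline s\geq\frac{\gamma}{\beta}$, and $\varphi^{\overline s}(i)$ is defined precisely by $H\pr{\varphi^{\overline s}(i),i}=H(\overline s,i^*)=c$. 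The paper instead differentiates the implicitly defined boundary function $\varphi^{\overline s}$ (Eq.\ \eqref{dfisb}), computes $\frac{d}{dt}\big(s(t)-\varphi^{\overline s}(i(t))\big)$ along the trajectory, bounds it by a nonnegative multiple of $s(t)-\varphi^{\overline s}(i(t))$ itself, and closes with Gronwall's inequality, treating $\overline s=\frac{\gamma}{\beta}$ as a separate case via the invariance of $\mathcal{G}$. Your argument buys elementarity and structural insight: no Gronwall, no differentiation of the implicit function, no need to control the Gronwall coefficient $\frac{\gamma\beta i}{\beta\varphi^{\overline s}(i)-\gamma}$, the case $\overline s=\frac{\gamma}{\beta}$ is handled uniformly rather than separately, and the monotone first integral $H$ makes transparent why the curves bounding the sets $\mathcal{B}^{\overline s}$ are time-reversed $0$-controlled trajectories (the observation behind \eqref{Comp0} and the magenta curve). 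What the paper's route buys is robustness: it only requires a differential inequality relating the boundary curve to the vector field, so it would survive perturbations of the SIR dynamics, or sets bounded by curves that are not trajectories, for which no exact monotone quantity such as $H$ exists. In both proofs the hypothesis that the trajectory remains in $\mathcal{Y}$ plays the same role, namely enforcing $i(t)\leq i^*$, which is the $i$-range constraint in the definition of $\mathcal{B}^{\overline s}$.
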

\begin{proof}
The argument is quite simple. Nonetheless, we provide the elements for our readers' sake. \\
First, the conclusion is obvious for $\overline s=\frac{\gamma}{\beta}$ as $\mathcal{B}^{\frac{\gamma}{\beta}}=\mathcal{G}$ which is invariant. \\
For $\overline s>\frac{\gamma}{\beta}$, by computing derivatives in the definition of $\varphi^{\overline s}$, it follows that 
\begin{equation}\label{dfisb}
\pr{\varphi^{\overline s}}'(i)\pr{-1+\frac{\gamma}{\beta \varphi^{\overline s}(i)}}=1\Leftrightarrow \pr{\varphi^{\overline s}}'(i)=\frac{\beta \varphi^{\overline s}(i)}{\gamma-\beta \varphi^{\overline s}(i)}. 
\end{equation}
Thus, ${\varphi^{\overline s}}'(i)<0$ as long as $\varphi^{\overline s}(i)>\frac{\gamma}{\beta}$ (which is always true due to the inequality $\overline s>\frac{\gamma}{\beta}$).\\
Computing the derivative in time of $t\mapsto s^{s_0,i_0,a}(t)-\varphi^{\overline s}\pr{i^{s_0,i_0,a}(t)}$ one gets\begin{align*}
\frac{d}{dt}\pr{s^{s_0,i_0,a}(t)-\varphi^{\overline s}\pr{i^{s_0,i_0,a}(t)}}&=\frac{-\gamma i^{s_0,i_0,a}(t)}{\gamma-\beta \varphi^{\overline s}\pr{i^{s_0,i_0,a}(t)}}\pr{\beta(1-a(t))s^{s_0,i_0,a}(t)-\beta \varphi^{\overline s}\pr{i^{s_0,i_0,a}(t)}}\\
&\leq \frac{-\gamma\beta i^{s_0,i_0,a}(t)}{\gamma-\beta \varphi^{\overline s}\pr{i^{s_0,i_0,a}(t)}}\pr{s^{s_0,i_0,a}(t)-\varphi^{\overline s}\pr{i^{s_0,i_0,a}(t)}}.
\end{align*}
The conclusion follows from Gronwall's inequality and the initial condition  $\pr{s_0,i_0}\in\mathcal{B}^{\overline s}$ (which implies $s_0\leq \varphi^{\overline s}(i_0)$).
\end{proof}

Since, on the other hand, every control $a\in\mathcal{A}d$ keeping the trajectory in $\mathcal{B}^{\overline s}$ also keeps it in $\mathcal{Y}$,  it follows that Problem \ref{CtrlProb0} restricted to $\mathcal{B}^{\overline s}$ (with $\overline s\in\big[\frac{\gamma}{\beta},\frac{\gamma}{\beta(1-\overline a)}\big]$) is equivalent with

\begin{problem}\label{prest} For every initial configuration $\pr{s_0,i_0}\in \mathcal{B}^{\overline s}\subset \mathcal{Y}$, 
\begin{itemize}
\item minimize \eqref{Cost0},
\item  under the state constraint \begin{equation}\begin{split}
\label{icuoverlines} \pr{s^{s_0,i_0,a}(t),i^{s_0,i_0,a}(t)}\in\mathcal{B}^{\overline s}\ \iff\ s^{s_0,i_0,a}(t)\leq \varphi^{\overline s}\pr{i^{s_0,i_0,a}(t)},\ \textnormal{a.s. on }\mathbb{R}_+.
\end{split}\end{equation}
\end{itemize}
\end{problem}
\noindent Let us note that the outward  normal unit vector to the "active" part of the boundary of $\mathcal{B}^{\overline s}$, that is $\set{\pr{\varphi^{\overline s}(i),i}:\ i> 0}\cup\pr{\left(0,\overline s\right]\times\set{i^*}}$, with the exception of the singular point $(\bar s,i^*)$ in which one could instead consider the normal cone,
is  $$n_{\pr{s,i}}=\begin{cases}\pr{0,1},&\mbox{if }\pr{s,i}\in(0,\overline s)\times\set{i^*},\\
\dfrac{\pr{1,-\pr{\varphi^{\overline s}}'(i)}}{\sqrt{1+|\pr{\varphi^{\overline s}}'(i)|^2}},&\mbox{if }i\in(0,i^*)\mbox{ and }s=\varphi^{\overline s}(i).
\end{cases}
$$
Using \eqref{dfisb},  we have \begin{equation}\label{Soner}\begin{split}&\scal{n_{\pr{s,i}},\pr{-\beta(1-\overline a)si,\pr{\beta(1-\overline a)s-\gamma}i}}\\&=\begin{cases}\pr{\beta(1-\overline a)s-\gamma}i^*,&\mbox{if }\pr{s,i}\in(0,\overline s)\times\set{i^*}\\
\frac{1}{\sqrt{1+|\pr{\varphi^{\overline s}}'(i)|^2}}\overline a\gamma i\frac{\beta\varphi^{\overline s}(i)}{\gamma-\beta\varphi^{\overline s}(i)},&\mbox{if }i\in(0,i^*),\ s=\varphi^{\overline s}(i).
\end{cases}\end{split}\end{equation}
It is easy to note that we always have $\scal{n_{\pr{s,i}},\pr{-\beta(1-\overline a)si,\pr{\beta(1-\overline a)s-\gamma}i}}\le0$.
Moreover, on the set $(0,\overline s)\times\set{i^*}$ we have 
\[\begin{split}
&\pr{\beta(1-\overline a)s-\gamma}i^*\leq \pr{\beta(1-\overline a)\overline s-\gamma}i^*<0\ \ \iff\ \bar s<\frac{\gamma}{\beta(1-\overline a)}.
\end{split}\]
On the remaining part of the boundary, that is $\{(\varphi^{\overline s}(i),i)\ :\ i\in(0,i^*)\}$, due to the monotonicity of the function $f(x)=\frac{x}{\gamma-x}$ on $(\gamma,+\infty)$ and of $\varphi^{\overline s}$ on $(0,i^*)$, we have 
$$
\frac{\beta\varphi^{\overline s}(i)}{\gamma-\beta\varphi^{\overline s}(i)}\le
\frac{\beta\varphi^{\overline s}(0^+)}{\gamma-\beta\varphi^{\overline s}(0^+)}<0,
$$
where $\displaystyle\varphi^{\overline s}(0^+):=\lim_{i\rightarrow 0+}\varphi^{\overline s}(i)$ which is finite and larger than $\frac{\gamma}{\beta}$; hence, for $i\ge \delta>0$ we have
\[\begin{split}
&\scal{n_{\pr{s,i}},\pr{-\beta(1-\overline a)si,\pr{\beta(1-\overline a)s-\gamma}i}}\\
&=
\frac{1}{\sqrt{1+|\pr{\varphi^{\overline s}}'(i)|^2}}\overline a\gamma i\frac{\beta\varphi^{\overline s}(i)}{\gamma-\beta\varphi^{\overline s}(i)}\le \overline a\gamma \delta\frac{\beta\varphi^{\overline s}(0^+)}{\gamma-\beta\varphi^{\overline s}(0^+)},
\end{split}\]
and the right hand side is a strictly negative constant. 

Second, the reader is invited to note that $\underset{t\rightarrow\infty}\lim s^{\frac{\gamma}{\beta(1-\overline a)},i^*,0}(-t)=\underset{i\rightarrow 0+}\lim\varphi^{\frac{\gamma}{\beta(1-\overline a)}}(i)>\frac{\gamma}{\beta(1-\overline a)}$.  In particular, if $\overline s$ is close to $\frac{\gamma}{\beta(1-\overline a)}$,  one has $\underset{i\rightarrow 0+}\lim\varphi^{\overline s}(i)>\frac{\gamma}{\beta(1-\overline a)}$. It follows that, locally around the boundary, $i^{s_0,i_0,b}>i_0$, so that, by reducing the study to such (viable) sets, i.e., $\mathcal{B}^{\overline s}\setminus\pr{[\frac{\gamma}{\beta(1-\overline a)},+\infty]\times\pp{0,\delta}}$, the estimates in \eqref{Soner} give an actual inward pointing qualification condition on these sets (see \cite[A3 Page 558]{Soner86_1} for details). As a conclusion, on the reunion of these sets first over $\delta>0$ then over $\overline s\nearrow \frac{\gamma}{\beta(1-\overline a)}$, containing the interior of $\mathcal{B}^{ \frac{\gamma}{\beta(1-\overline a)}}$, the value function is bounded and uniformly continuous.

\subsection{A2}\label{A2}
\begin{proof}[Sketch of the proof of Proposition \ref{PropDPP}]
The first equality follows the same reasoning as in \cite[Theorem 14]{Goreac_Ivascu_13} (with the only difference that we are dealing here with a discounted problem) and its proof will be omitted. 
Let us just mention that, following \cite[Theorem 14]{Goreac_Ivascu_13}, the problem can be reduced to continuous $l_1$ (via inf-convolutions) and without state constraints by adding a penalty term involving $nd_{\mathcal{Y}}$ (the distance function) and allowing $n\rightarrow\infty$.  \\
If $l_1$ is continuous, one considers a state penalty\begin{align*}
\begin{cases}
l_1^n(s,i)&:=l_1(s,i)+n\pr{d_{\mathcal{Y}}(s,i)\wedge 1},\\
V_n(s_0,i_0)&:=\underset{a\in\mathbb{L}^0\pr{\mathbb{R};\pp{0,\overline a}}}{\inf}\displaystyle\int_0^\infty e^{-qt}l_1^n\pr{s^{s_0,i_0,a}(t),i^{s_0,i_0,a}(t)}dt.
\end{cases}
\end{align*}
As in \cite[Theorem 14]{Goreac_Ivascu_13},  \begin{align}\label{estim}\Lambda^*=q\ \sup_{n\geq 1}V_n, \textnormal{ on }\mathcal{Y}.
\end{align} 
For this kind of (state unconstrained) problems,  with $n\geq 1$ fixed, using \cite{krylov_00} (see also \cite[Lemmas 2.6 and 2.7]{barles_jakobsen_02} ), it follows that for every $n\geq 1$, for some $C_n>0$ and for every $\varepsilon>0$,  there exists $v^\varepsilon_n\in C^1_b\pr{\mathbb{R}^2}$ (continuously differentiable real functions that are bounded and have bounded first-order derivatives) such that \begin{equation}\label{estim'}
\begin{cases}
\underset{\pr{s,i}\in\mathbb{R}^2}{\sup}\abs{v^{\varepsilon}_n(s,i)-V_n(s,i)}\leq C_n\varepsilon,\\
\underset{\pr{s,i,a}\in\mathbb{R}^2\times\pp{0,\overline a}}{\sup}\set{\begin{split}&-\beta(1-a)si\partial_sv^\varepsilon_n(s,i)+\pr{\beta(1-a)s-\gamma}i\partial_iv^\varepsilon_n(s,i)\\
&-qv^\varepsilon(s,i)+l_1^n(s,i)\end{split}}\leq 0,\\[2.5ex]
C_n\leq C(1+n),
\end{cases}
\end{equation}
where $C>0$ is a constant independent of $n$ and $\varepsilon$. The last inequality, merely stating that $C_n$ is chosen depending on the modulus of continuity of $V_n$ (i.e.  the Lipschitz constants of the dynamics and coefficients) can be found, for instance,  in \cite[Propositions 18, 19]{Goreac_SIAM_2015} in a much more general setting. In particular, since $l_1^n\geq l_1$, it follows that \[v_n^\varepsilon\in\mathbf{\Phi},\ \forall n\geq1,\forall \varepsilon>0.\]\footnote{In the semi-continuous case, $v_n^\varepsilon$ should be penalized by a term $\omega\pr{\frac{1}{n}}:=\frac{1}{q}\underset{\pr{s,i}\in\mathcal{Y}}{\sup}\pr{\underset{\pr{s',i'}\in\mathbb{R}^2}{\inf}\pr{l_1(s,i)+n\abs{\pr{s,i}-\pr{s',i'}}}-l_1(s,i)}$.  However, its convergence to $0$ requires $l_1$ to be continuous.}
A glance at \eqref{Lambda*2} and \eqref{DPP} shows that one only needs to prove the inequality $\leq$ in \eqref{DDPP}. More precisely, it suffices to prove that for all measures $\mu\in\Theta_{0,T}(s_0,i_0)$, \[\int_{\mathcal{Y}}\Lambda^*(s,i)\mu_2(ds,di)\leq q\ \sup_{f\in\mathbf{\Phi}}\int_{\mathcal{Y}}f(s,i)\mu_2(ds,di).\]
For every $n\geq 1$, by considering $\mathbf{\Phi}\ni f_n\pr{\cdot}:=v_n^{\frac{1}{n^2}}$, it follows from \eqref{estim'} that, on $\mathcal{Y}$, $V_n(s,i)\leq f_n(s,i)+C\frac{1+n}{n^2}$. By integrating with respect to $\mu_2$, it follows that
\[\int_{\mathcal{Y}}V_n(s,i)\mu_2(ds,di)\leq \sup_{f\in\mathbf{\Phi}}\int_{\mathcal{Y}}f(s,i)\mu_2(ds,di)+C\frac{1+n}{n+1}.\]The conclusion follows by passing to the limit as $n\rightarrow\infty$ and using \eqref{estim}, the boundedness of $\Lambda^*$ and recalling that $\mu_2$ is a probability measure.
\end{proof}
\bibliographystyle{abbrv}
\bibliography{Bibl_2021}

\end{document}